\newtheorem{theorem}{Theorem}[section]
\newtheorem{definition}{Definition}[section]
\newtheorem{lemma}{Lemma}[section]
\newtheorem{proposition}{Proposition}[section]
\newtheorem{remark}{Remark}[section]
\numberwithin{equation}{section}
\begin{document}
\title
{Blow-up results for a Nakao-type problem with a time-dependent damping term and derivative-type nonlinearities} %[Blow-up region and the lifespan estimate for the Nakao-type problem]

\author[a,b]{Yuequn Li}
\author[b]{Alessandro Palmieri\footnote{
\href{mailto:alessandro.palmieri@uniba.it}{alessandro.palmieri@uniba.it}}}

\affil[a]{School of Mathematical Sciences, Nanjing Normal University Nanjing 210023, China}
\affil[b]{Department of Mathematics, University of Bari, Via E. Orabona 4, 70125 Bari, Italy}

\date{}

\maketitle

% --------------------------------------------------------------

\begin{abstract}
In this paper, we consider a semilinear system of damped wave equations coupled through power nonlinearities of derivative-type. In particular, we consider a classical damped wave equation, i.e., with constant coefficients, and a wave equation with a time-dependent coefficient for the damping term. For this time-dependent coefficient we analyze two cases: the scale-invariant case and the scattering producing case. We prove blow-up results and derive upper bound estimates for the lifespan of local solutions. Our approach is based on an iteration argument for a couple of functionals related to the components of a local solution.
\end{abstract}

% -----------------------------------------

\begin{flushleft}
\textbf{Keywords} weakly coupled system, blow-up, lifespan estimates, iteration argument
\end{flushleft}

\begin{flushleft}
\textbf{AMS Classification (2020)} 35B44, 35G55, 35L52, 35L71.
\end{flushleft}

\section{Introduction}\label{s1}
In this paper, we consider the damped Nakao-type problem with derivative-type nonlinearities
\begin{align}\label{eqs}
\begin{cases}
\partial_t^2u-\Delta u+\partial_tu=\vert \partial_tv\vert^p, & t>0,\, x\in\mathbb{R}^n, \\
\partial_t^2v-\Delta v+b(t)\partial_tv=\vert \partial_tu\vert^q, & t>0, \,  x\in\mathbb{R}^n,\\
(u,\partial_tu,v,\partial_tv)(0,x)=\varepsilon (u_0,u_1,v_0,v_1)(x), \ \ & x\in\mathbb{R}^n,
\end{cases}
\end{align}
where $p,q>1$ and $\varepsilon>0$ describes the size of the initial data.
For the coefficient $b=b(t)$ of the damping term in the $v$-equation we carry out our analysis in two cases:
\begin{enumerate}
\item the \emph{scale-invariant case}, that is, for $b(t)=\frac{\mu}{1+t}$ with $\mu>0$;
\item the \emph{scattering producing case}, that is, when $b=b(t)$ is nonnegative and $b\in L^1([0,\infty))$. 
\end{enumerate}
We provide a short overview of the above two cases later in this section.

Next, we briefly discuss the existing literature on semilinear wave and damped wave equations with a derivative-type nonlinearity and the extension of these results to weakly coupled systems.

For the semilinear wave equation
\begin{equation}\label{single1}
\begin{cases}
\partial_t^2u-\Delta u=\vert \partial_tu\vert^p, & t>0, \, x\in\mathbb{R}^n, \\
u(0,x)=\varepsilon u_0(x), & x\in\mathbb{R}^n, \\
\partial_t u(0,x)=\varepsilon u_1(x), & x\in\mathbb{R}^n,
\end{cases}
\end{equation} the critical exponent is the so-called Glassey exponent $p_{\mathrm{Gla}}(n)=\frac{n+1}{n-1}$ for $n\geq 2$ (formally, $p_{\mathrm{Gla}}(1)=+\infty$). For the proof of the Glassey's conjecture, we refer to \cite{Ag1991, HiTu1995, HiWaYo2012, Jo1981, Ma1983, Ra1987, Sc1986, Si1984, Tz1998, Wang2015, Zhou2001} and to the references therein. By critical exponent we mean here the threshold value for the exponent $p$ that separates the blow-up range $1<p\leq p_{\mathrm{Gla}}(n)$ from the small data solutions' global existence range $p>p_{\mathrm{Gla}}(n)$. In the recent work \cite{ChenPa2024}, the finer class of critical nonlinear terms $\{|\partial_t u|^{p_{\mathrm{Gla}}(n)} \mu(|\partial_t u|): \mu \mbox{ modulus of continuity} \}$ is investigated for the wave equation.

Let us consider the weakly coupled system of semilinear wave equations with derivative-type nonlinearities
\begin{equation}\label{eqs1}
\begin{cases}
\partial_t^2u-\Delta u=\vert \partial_tv\vert^p, & t>0, \, x\in\mathbb{R}^n,\\
\partial_t^2v-\Delta v=\vert \partial_tu\vert^q, & t>0, \, x\in\mathbb{R}^n,\\
(u,\partial_tu,v,\partial_tv)(0,x)=\varepsilon (u_0,u_1,v_0,v_1)(x), \ \ & x\in\mathbb{R}^n.
\end{cases}
\end{equation}
Combining the partial results in the literature \cite{De1999, IkSoWa2019, KuKu2006, Xu2004}, we have that the critical line in the $(p,q)$-plane is given by
\begin{equation}\label{criticalcurve1}
\Lambda(n,p,q)\doteq \max\left\{\frac{p+1}{pq-1},\frac{q+1}{pq-1}\right\}-\frac{n-1}{2}=0.
\end{equation} In this framework, by critical line we mean the line in the $(p,q)$-plane such that for $\Lambda(n,p,q)\geq 0$ local solutions blow up in finite time (under suitable sign assumptions for the Cauchy data), while for $\Lambda(n,p,q)< 0$ it holds the global existence of small data solutions. We point out that, to the best of our knowledge, the global existence of small data solutions to \eqref{eqs1} has been proved just for $n=3$.

For the Cauchy problem associated to the semilinar wave equation 
\begin{equation}\label{single b(t)}
\begin{cases}
\partial_t^2u-\Delta u+b(t)\partial_t u =\vert \partial_t u\vert^p, & t>0, \, x\in\mathbb{R}^n, \\
u(0,x)=\varepsilon u_0(x), & x\in\mathbb{R}^n, \\
\partial_t u(0,x)=\varepsilon u_1(x), & x\in\mathbb{R}^n,
\end{cases}
\end{equation} blow-up results have been established in the scattering producing case $b\in L^1([0,\infty))$ in \cite{LaiTa2019,WY2019} and in the scale-invariant case $b(t)=\frac{\mu}{1+t}$ in \cite{HaHa2020, HaHa2021, LaiTaWa2017, PaTu2021}. In the first case, it is shown the blow-up of local solutions when $1<p\leq p_{\mathrm{Gla}}(n)$, while, in the latter case, the blow-up range is given by $1<p\leq p_{\mathrm{Gla}}(n+\mu)$. The classification of the damping coefficient for the corresponding homogeneous problems was introduced by Wirth in \cite{Wirth2004,Wirth2006}. In particular, when $b\in L^1([0,\infty))$ we call the damping term $b(t)\partial_t u$ scattering producing due to the following result (proved in \cite[Theorem 3.1]{Wiphdt}): \emph{if $b=b(t)$ is a nonnegative and summable function, then there exists an isomorphism $\mathcal{W}:\mathcal{E}\to \mathcal{E}$, where $\mathcal{E}\doteq \dot{H}^1(\mathbb{R}^n)\times L^2(\mathbb{R}^n)$, such that, denoting by $w,\tilde{w}$ the solutions of the linear homogeneous Cauchy problems
\begin{align*}
& \partial_t^2 w-\Delta w+b(t) \partial_t w=0, && (w,\partial_t w)(0,\cdot)=(w_0,w_1), \\
& \partial_t^2 \tilde{w}-\Delta \tilde{w}=0, && (\tilde{w},\partial_t \tilde{w})(0,\cdot)=\mathcal{W}(w_0,w_1),
\end{align*} respectively, then
\begin{align*}
\|(w,\partial_t w)(t,\cdot)-(\tilde{w},\partial_t \tilde{w})(t,\cdot)\|_{\mathcal{E}} \to 0 \ \ \mbox{as} \ \ t\to +\infty.
\end{align*}}
Roughly speaking, we can say that in the scattering producing case, the lower order term $b(t)\partial_t u$ is somewhat negligible and does not modify the critical exponent from \eqref{single1} to \eqref{single b(t)}, while in the scale-invariant case, there is shift in the space dimension in the Glassey exponent: this shift is a consequence of the threshold nature of the scale-invariant coefficients in Wirth's classification. In the case with classical damping $b(t)=1$, to the best of our knowledge, no blow-up result for \eqref{single b(t)} has been proved in the literature.

Let us discuss now the state of the art for the following weakly coupled system of semilinear damped wave equations
\begin{equation}\label{eqs2}
\begin{cases}
\partial_t^2u-\Delta u+b_1(t)\partial_tu=\vert \partial_tv\vert^p, & t>0,\, x\in\mathbb{R}^n,\\
\partial_t^2v-\Delta v+b_2(t)\partial_tv=\vert \partial_tu\vert^q, & t>0,\, x\in\mathbb{R}^n,\\
(u,\partial_tu,v,\partial_tv)(0,x)=\varepsilon (u_0,u_1,v_0,v_1)(x),  & x\in\mathbb{R}^n.
\end{cases}
\end{equation}
When $b_1,b_2$ are both nonnegative $L^{1}([0,\infty))$ functions, the local solutions to \eqref{eqs2} blow up for $p,q>1$ such that $\Lambda(n,p,q)\geq 0$, see \cite{PaTa20202}. On the other, for $b_j(t)=\frac{\mu_j}{1+t}$ with $j=1,2$, combining the results from \cite{HaHa2022,PaTu2021} we have that blow-up results hold for \eqref{eqs2} for $p,q>1$ such that
\begin{equation}\label{newregion}
\max\left\{\frac{p+1}{pq-1}-\frac{n+\mu_1-1}{2},\ \frac{q+1}{pq-1}-\frac{n+\mu_2-1}{2}\right\}\geq 0.
\end{equation} Clearly, for $\mu_1=\mu_2$ the previous blow-up range is exactly $\Lambda(n+\mu_1,p,q)\geq 0$.

For $b_1(t)=1$ and $b_2(t)=0$, the Cauchy problem in \eqref{eqs2} is also known as Nakao-type problem, after the author of \cite{N16,N18}. In \cite{ChenReissig2021, Waka2017} the corresponding weakly coupled system with power nonlinearities has been studied, obtaining as blow-up range an intermediate region between those for the weakly coupled systems of wave equations and the weakly coupled systems of damped wave equations with the same kind of nonlinear terms. In \cite{Chen2022}, Chen proved the blow-up of local solutions to \eqref{eqs2} for $b_1(t)=1$ and $b_2(t)=0$ (under suitable sign assumptions for the data) whenever the exponents $p,q>1$ satisfy
\begin{equation}\label{blow-up range Nakao}
\Theta(n,p,q) \doteq \frac{1}{pq-1}-\frac{n-1}{2}>0,
\end{equation} by following the approach introduced in \cite[Section 13.2]{LiZh2017}. Subsequently, in \cite{PaTa2023} the blow-up for the local solutions to \eqref{eqs2} for $(b_1,b_2)=(1,0)$ has been proved also in the limit case $\Theta(n,p,q)=0$ by employing the blow-up technique along a certain characteristic line from \cite{Zhou2001}.

Finally, in \cite{LiPa2025} we have recently studied the Nakao-type problem in \eqref{eqs} with power nonlinearities $(|v|^p,|u|^q)$ in place of the derivative-type nonlinearities $(|\partial_t v|^p,|\partial_t u|^q)$ (both for the scale-invariant and the scattering producing case).
 
 In this work, by introducing suitable functionals and deriving the corresponding lower bound estimates, we apply an iteration argument to establish the blow-up region in the $(p,q)$ plane in the two considered cases for $b$. For the scattering producing case, we will prove a blow-up result for \eqref{eqs} in the same range as in the case $b(t)=0$, while in the scale invariant case the blow-up condition will be $\Theta(n+\mu,p,q)\geq 0$, finding also for \eqref{eqs} a shift of magnitude $\mu$ in the space dimension.

   The remaining sections of this paper are organized as follows: in Section 2, we present the definition of weak solution to \eqref{eqs} and state our main results.  In Section 3, we introduce the relevant functionals and derive the associated iteration frames and first lower bound estimates. Finally, in Section 4 we prove the main results by using an iteration argument.

\paragraph*{Notations}
$a \lesssim b$  means that there exists a constant $C>0$ independent of $t,x,\varepsilon$ such that $a\leq Cb$. %We point out that the positive constant $C$ may vary from line to line in this paper. 
$B_R\doteq\{x\in\mathbb{R}^n: |x|\leq R\}$ denotes the ball of radius $R>0$ around the origin.

\section{Main results}

Let us begin by introducing the definition of the weak solutions to \eqref{eqs} that we employ in the present paper. We underline that this class of solutions is the largest class to which our approach can be applied. For the sake of brevity and simplicity, we keep calling these solutions weak solutions, although we are assuming more regularity with respect to the space variable in comparison to the usual definition of weak solutions.

\begin{definition}\label{engdef1}
Let $\varepsilon>0$ and $u_0,v_0\in W^{1,1}_{\mathrm{loc}}(\mathbb{R}^n)$, $u_1,v_1\in L^1_{\mathrm{loc}}(\mathbb{R}^n)$. We assume that $u_0,u_1,v_0,v_1$ are compactly supported functions with 
\begin{align}\label{support condition data}
\mathrm{supp}\, (u_0,u_1,v_0,v_1)\subset B_R
\end{align} 
for some $R>0$. We say that $(u,v)$ is a weak solution to \eqref{eqs} on $[0,T)$, if
\begin{align*}
 & u \in \mathcal{C}\big([0,T);W^{1,1}_{\mathrm{loc}}(\mathbb{R}^n)\big)\cap \mathcal{C}^1\big([0,T);L^1_{\mathrm{loc}}(\mathbb{R}^n)\big) \ \mbox{such that }  \partial_t u\in L^q_{\mathrm{loc}}([0,T)\times\mathbb{R}^n) \\
 & v \in \mathcal{C}\big([0,T);W^{1,1}_{\mathrm{loc}}(\mathbb{R}^n)\big)\cap \mathcal{C}^1\big([0,T);L^1_{\mathrm{loc}}(\mathbb{R}^n)\big) \ \mbox{such that }  \partial_t v\in L^p_{\mathrm{loc}}([0,T)\times\mathbb{R}^n) 
\end{align*} 
satisfy the support condition
\begin{align}\label{support condition sol}
\mathrm{supp}\, (u,v)(t,\cdot) \subset B_{R+t} \ \mbox{ for any} \ t\in(0,T),
\end{align} 
the initial conditions $u(0,\cdot) = \varepsilon u_0$, $v(0,\cdot) = \varepsilon v_0$ in $L_{\mathrm{loc}}^1(\mathbb{R}^n)$, and the following integral identities
\begin{align}
&\int_0^t \int_{\mathbb{R}^n}(-\partial_s u(s,x)\partial_s\phi(s,x)+\partial_su(s,x)\phi(s,x)+\nabla u(s,x)\cdot\nabla\phi(s,x))\mathrm{d}x\mathrm{d}s \notag \\
&+\int_{\mathbb{R}^n}\partial_tu(t,x)\phi(t,x)\mathrm{d}x=\varepsilon\int_{\mathbb{R}^n}u_1(x)\phi(0,x)\mathrm{d}x
+\int_0^t\int_{\mathbb{R}^n}\vert \partial_sv(s,x)\vert^p\phi(s,x)\mathrm{d}x\mathrm{d}s \label{inteeq11}
\end{align} and
\begin{align}
&\int_0^t\int_{\mathbb{R}^n}\big(-\partial_sv(s,x)\partial_s\psi(s,x)+\nabla v(s,x)\cdot\nabla\psi(s,x)+b(s) \partial_sv(s,x)\psi(s,x)\big)\mathrm{d}x\mathrm{d}s \notag \\
&+\int_{\mathbb{R}^n}\partial_tv(t,x)\psi(t,x)\mathrm{d}x=\varepsilon\int_{\mathbb{R}^n}v_1(x)\psi(0,x)\mathrm{d}x
+\int_0^t\int_{\mathbb{R}^n}\vert \partial_su(s,x)\vert^q\psi(s,x)\mathrm{d}x\mathrm{d}s \label{inteeq12}
\end{align}
for any $t\in(0,T)$ and any test functions $\phi,\psi\in \mathcal{C}_0^\infty([0,T)\times\mathbb{R}^n)$.
\end{definition}

\begin{remark}\label{furtherintegrate}
Performing further integrations by parts in \eqref{inteeq11} and \eqref{inteeq12}, we have
\begin{align}
&\int_0^t\int_{\mathbb{R}^n}u(s,x)\big(\partial^2_s\phi(s,x)-\Delta\phi(s,x)-\partial_s\phi(s,x)\big)\mathrm{d}x\mathrm{d}s \notag\\
&\quad+\int_{\mathbb{R}^n}\big(\partial_tu(t,x)\phi(t,x)-u(t,x)\partial_t\phi(t,x)+u(t,x)\phi(t,x)\big)\mathrm{d}x \notag\\
&=\varepsilon\int_{\mathbb{R}^n}\big((u_0(x)+u_1(x))\phi(0,x)-u_0(x)\partial_t\phi(0,x)\big)\mathrm{d}x+\int_0^t\int_{\mathbb{R}^n}\vert \partial_sv(s,x)\vert^p\phi(s,x)\mathrm{d}x\mathrm{d}s \label{inteeq21}
\end{align}
and
\begin{align}
&\int_0^t\int_{\mathbb{R}^n}v(s,x)\left[\partial_s^2\psi(s,x)-\Delta\psi(s,x)-\partial_s\big(b(s)\psi(s,x)\big)\right]\mathrm{d}x\mathrm{d}s \notag\\
&\quad +\int_{\mathbb{R}^n}\big(\partial_tv(t,x)\psi(t,x)-v(t,x)\partial_t\psi(t,x)+b(t)v(t,x)\psi(t,x)\big)\mathrm{d}x\notag \\
&=\varepsilon\int_{\mathbb{R}^n}\big((b(0) v_0(x)+v_1(x))\psi(0,x)-v_0(x)\partial_t\psi(0,x)\big)\mathrm{d}x+\int_0^t\int_{\mathbb{R}^n}\vert \partial_su(s,x)\vert^q\psi(s,x)\mathrm{d}x\mathrm{d}s \label{inteeq22}
\end{align}
for any $t\in(0,T)$ and any test functions $\phi,\psi\in \mathcal{C}_0^\infty([0,T)\times\mathbb{R}^n)$.
\end{remark}

\begin{theorem}\label{theorem1}
Let $b(t)=\frac{\mu}{1+t}$, for some $\mu>0$ in \eqref{eqs}.  
Let $u_0,v_0\in W^{1,1}_{\mathrm{loc}}(\mathbb{R}^n), u_1,v_1\in L^{1}_{\mathrm{loc}}(\mathbb{R}^n)$ be nonnegative and compactly supported functions satisfying \eqref{support condition data} for some $R>0$ such that
 $u_1\geq u_0$ and $v_0$ is nontrivial. 
Let us assume that the exponents $p,q>1$ fulfill
\begin{equation}\label{method2}
\Theta(n+\mu,p,q)=\frac{1}{pq-1}-\frac{n+\mu-1}{2}\geq 0.
\end{equation} 
 Then, there exists $\varepsilon_0=\varepsilon_0(n,\mu,p,q,R,%u_0,u_1,
 v_0,v_1)>0$ such that for any $\varepsilon\in(0,\varepsilon_0]$  the local weak solution $(u,v)$ to \eqref{eqs} on $[0,T(\varepsilon))$, in the sense of Definition \ref{engdef1}, blows up in finite time and
\begin{equation}
T(\varepsilon)\leq 
\begin{cases}
C\varepsilon^{-\tfrac{1}{\Theta(n+\mu,p,q)}} & \text{if} \ \, \Theta(n+\mu,p,q)>0,\\
\exp\left(C\varepsilon^{-(pq-1)}\right) & \text{if}\ \, \Theta(n+\mu,p,q)=0,
\end{cases}
\end{equation} where $C>0$ is independent of $\varepsilon$.

\end{theorem}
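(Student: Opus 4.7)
The strategy is to run a coupled iteration argument on two spatial integrals of the time derivatives of the components. Introduce
\[
\mathcal{U}(t):=\int_{\mathbb{R}^n}\partial_t u(t,x)\,dx,\qquad \mathcal{V}(t):=(1+t)^{\mu}\int_{\mathbb{R}^n}\partial_t v(t,x)\,dx,
\]
where the weight $(1+t)^{\mu}$ on $\mathcal{V}$ is tuned precisely so as to absorb the scale-invariant damping of the $v$-equation and, ultimately, to produce the shift $n\mapsto n+\mu$ in the blow-up range. Testing \eqref{inteeq11}--\eqref{inteeq12} against cutoff functions identically equal to $1$ on the light cone $B_{R+t}$ and exploiting the finite speed of propagation \eqref{support condition sol}, the Laplacian contributions drop out and one obtains the ODE identities
\[
\mathcal{U}'(t)+\mathcal{U}(t)=\int_{\mathbb{R}^n}|\partial_t v|^{p}\,dx,\qquad \mathcal{V}'(t)=(1+t)^{\mu}\int_{\mathbb{R}^n}|\partial_t u|^{q}\,dx.
\]
Jensen's inequality applied on $B_{R+t}$, whose measure is $\sim(R+t)^{n}$, then closes these identities into the coupled iteration frame
\[
\mathcal{U}'(t)+\mathcal{U}(t)\gtrsim (R+t)^{-n(p-1)}(1+t)^{-\mu p}|\mathcal{V}(t)|^{p},\qquad \mathcal{V}'(t)\gtrsim (1+t)^{\mu}(R+t)^{-n(q-1)}|\mathcal{U}(t)|^{q}.
\]

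Next, the sign conditions on the data have to be converted into positive polynomial seed lower bounds. The auxiliary quantity $\int(\partial_t u+u)\,dx$ has nonnegative derivative thanks to the positivity of $|\partial_t v|^{p}$, and its initial value is $\varepsilon\int(u_0+u_1)\,dx\geq 2\varepsilon\int u_0\,dx$ under the hypothesis $u_1\geq u_0$; combined with the integrating factor $e^{t}$ in the $u$-equation, this yields a positive lower bound of order $\varepsilon$ for $\mathcal{U}(t)$ on a half-line $[t_0,\infty)$, while in the degenerate sub-case $u_0\equiv 0$ the same seed is produced by first feeding into the frame a preliminary lower bound on $\mathcal{V}$ coming from the free $v$-solution. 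In parallel, the non-triviality of $v_0$, together with the explicit integrating factor $(1+t)^{\mu}$ and a d'Alembert-type representation for the free solution of the damped $v$-equation, delivers a matching polynomial lower bound of order $\varepsilon$ for $\mathcal{V}(t)$. Plugging these seeds back into the frame and iterating produces sequences of lower bounds of the shape
\[
\mathcal{U}(t)\ge C_j(t-t_0)^{\alpha_j}t^{-\beta_j},\qquad \mathcal{V}(t)\ge\widetilde C_j(t-t_0)^{\widetilde\alpha_j}t^{-\widetilde\beta_j},
\]
with the four exponent sequences satisfying coupled two-step linear recursions of characteristic ratio $pq$. A careful asymptotic analysis of these recursions shows that the sharp condition for the net exponent of $t$ to become positive in the limit $j\to\infty$, and hence for the lower bound on $\mathcal{U}$ to blow up, is exactly $\Theta(n+\mu,p,q)>0$; the divergence of the lower bound then forces a contradiction with the local existence for any $t>T_{*}\sim\varepsilon^{-1/\Theta(n+\mu,p,q)}$, which is the polynomial lifespan estimate of the theorem.

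I expect the main technical obstacle to lie in the critical case $\Theta(n+\mu,p,q)=0$. Since the contribution to the $t$-exponent then vanishes at every step, the only remaining growth has to come from a logarithmic correction $\log(t/t_0)$ that must be inserted into the seeds and carried through each round of the iteration, following the characteristic-line blow-up technique of \cite{Zhou2001,PaTa2023}; the combinatorial bookkeeping of the log-factor across the coupled system, together with the careful cancellation of the weights $(1+t)^{-\mu p}$ and $(1+t)^{\mu}$ inherited from the scale-invariant damping, has to be performed with care in order to produce the stated exponential lifespan $\exp(C\varepsilon^{-(pq-1)})$. A secondary, but still delicate, step is to verify that the two weights appearing in the iteration frame combine along the recursion in just the right way to replace $n$ by $n+\mu$ in the final threshold, which is ultimately what encodes the dimensional shift announced in the statement.
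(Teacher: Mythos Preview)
Your iteration frame is set up correctly, but it does \emph{not} deliver the threshold claimed in the theorem. With the constant test function, Jensen's inequality on $B_{R+t}$ produces the loss factor $(R+t)^{-n(p-1)}$ in the $u$-estimate and $(R+t)^{-n(q-1)}$ in the $v$-estimate. If you feed a seed $\mathcal V\gtrsim\varepsilon$ through one full round of your frame, you obtain (after the $e^{s-t}$ slicing) $\mathcal U\gtrsim\varepsilon^{p}t^{-n(p-1)-\mu p}$ and then $\mathcal V\gtrsim\varepsilon^{pq}t^{1-(n+\mu)(pq-1)}$. The net $t$-exponent per round is therefore $1-(n+\mu)(pq-1)$, and the iteration blows up only when
\[
\frac{1}{pq-1}>n+\mu,
\]
which is a strictly smaller region than the theorem's condition $\frac{1}{pq-1}\ge \frac{n+\mu-1}{2}$. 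This is the familiar Kato-exponent-versus-Glassey-exponent gap: plain spatial averages miss the concentration of the solution near the boundary of the light cone. The paper closes this gap by testing not against $1$ but against the Yordanov--Zhang positive eigenfunction $\varphi$ (with $\Delta\varphi=\varphi$, $\varphi(x)\sim c_n|x|^{-(n-1)/2}e^{|x|}$), paired with time weights $e^{-t}$ on the $u$-side and a modified-Bessel factor $\rho(t)$ on the $v$-side so that $\Psi_2=\rho(t)\varphi(x)$ solves the adjoint of the linearized $v$-equation. The crucial gain is that $\int_{B_{R+t}}e^{-t}\varphi(x)\,dx\lesssim (R+t)^{(n-1)/2}$, which replaces your $-n(p-1)$ by $-\frac{n-1}{2}(p-1)$ in the frame and produces exactly the shift to $\frac{n+\mu-1}{2}$.

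There is a secondary but real problem with your seeding. Under the stated hypotheses only $v_0$ is assumed nontrivial; $u_0,u_1,v_1$ may all vanish. For the free $v$-equation one computes $\frac{d}{dt}\big[(1+t)^\mu\int\partial_t v\,dx\big]=0$, so with data $(v_0,0)$ your functional $\mathcal V$ stays identically zero for the linear part, and your ``d'Alembert-type'' claim cannot give $\mathcal V\gtrsim\varepsilon$ from $v_0$ alone. In the paper this is handled by first proving $V_0(t)=\int v\,\Psi_2\,dx\gtrsim\varepsilon$ from the adjoint structure (which genuinely uses $v_0$), and then transferring this to $V_1(t)=\int\partial_t v\,\Psi_2\,dx\gtrsim\varepsilon$ via a differential inequality for $V_1$ that involves $V_0$ as a positive source term. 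Both steps rely on the special test function $\Psi_2$ and have no analogue for the constant test function.
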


\begin{theorem}\label{theorem2}
Let $b\in L^1([0,\infty))$ be a nonnegative function.
Let $u_0,v_0\in W^{1,1}_{\mathrm{loc}}(\mathbb{R}^n), u_1,v_1\in L^{1}_{\mathrm{loc}}(\mathbb{R}^n)$ be nonnegative and compactly supported functions satisfying \eqref{support condition data} for some $R>0$ such that
$u_1\geq u_0$ and $v_1$ is nontrivial.

Let us consider $p,q>1$ such that 
\begin{equation}\label{blowupcondition2}
\Theta(n,p,q)=\frac{1}{pq-1}-\frac{n-1}{2}\geq 0.
\end{equation}

Then, there exists $\varepsilon_0=\varepsilon_0(n,b,p,q,R,v_1)>0$ such that for any $\varepsilon\in(0,\varepsilon_0]$  the local weak solution $(u,v)$ to \eqref{eqs} on $[0,T(\varepsilon))$, in the sense of Definition \ref{engdef1}, blows up in finite time and
\begin{equation}
T(\varepsilon)\leq 
\begin{cases}
C\varepsilon^{-\tfrac{1}{\Theta(n,p,q)}} & \text{if} \ \, \Theta(n,p,q)>0,\\
\exp\left(C\varepsilon^{-(pq-1)}\right) & \text{if}\ \, \Theta(n,p,q)=0,
\end{cases}
\end{equation} where $C>0$ is independent of $\varepsilon$.
\end{theorem}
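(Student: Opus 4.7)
The plan is to run an iteration argument on a coupled pair of $1$-dimensional functionals adapted to the scattering damping. A natural choice is
\begin{align*}
F(t) \doteq \int_{\mathbb{R}^n} u(t,x)\,\psi(t,x)\,\mathrm{d}x, \qquad V(t) \doteq \int_{\mathbb{R}^n} \partial_t v(t,x)\,\psi(t,x)\,\mathrm{d}x,
\end{align*}
with weight $\psi(t,x) = e^{-t}\phi(x)$, where $\phi(x) = \int_{S^{n-1}}e^{x\cdot\omega}\,\mathrm{d}\sigma(\omega)$ is the Yordanov--Zhang positive eigenfunction; $\psi$ solves the adjoint of the classical-damped wave operator, and the asymptotics $\phi(x) \sim |x|^{-(n-1)/2}e^{|x|}$ as $|x|\to\infty$ are what produce the sharp $(n-1)/2$ exponent in $\Theta(n,p,q)$. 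After a standard cut-off argument permitted by the support condition \eqref{support condition sol}, plugging $\psi$ into \eqref{inteeq11} and \eqref{inteeq12}, applying Jensen's inequality on $B_{R+t}$ to the nonlinear terms, and solving the resulting first-order ODEs with their integrating factors, I would obtain coupled iteration frames schematically of the form
\begin{align*}
F(t) \gtrsim \varepsilon + \int_0^t (R+s)^{-A_1} V(s)^p\,\mathrm{d}s, \qquad V(t) \gtrsim \varepsilon + \int_0^t (R+s)^{-A_2} F(s)^q\,\mathrm{d}s, \qquad t \geq R,
\end{align*}
with positive exponents $A_j = A_j(n,p,q)$ tuned so that $\Theta(n,p,q)$ emerges as the critical index of the recursion.

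The scattering damping contributes only a harmless multiplicative constant, since $b \in L^1([0,\infty))$ makes the integrating factor $e^{B(t)}$, $B(t)=\int_0^t b(s)\,\mathrm{d}s$, sit between two positive constants. Combined with $V(0) = \varepsilon\int v_1\,\psi(0,\cdot) > 0$ (here enters the hypothesis ``$v_1$ nontrivial'', playing the role of ``$v_0$ nontrivial'' in Theorem \ref{theorem1}), this yields the baseline $V(t) \gtrsim \varepsilon$ for all $t\geq 0$. Feeding this baseline into the frames and iterating, an induction on $j \in \mathbb{N}$ produces lower bounds $V(t) \geq D_j (t-R_0)^{\beta_j}$ for $t \geq R_0$, with $\beta_j$ satisfying a linear recursion of multiplier $pq$ and $\log D_{j+1} = pq\,\log D_j + O(j)$. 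Solving the recursions gives $\beta_j \sim (pq)^j$ and a sequence of lower bounds whose divergence is controlled by a base quantity of the form $C\,\varepsilon^{pq}(t-R_0)^{\kappa}$ with $\kappa = \kappa(n,p,q)$ strictly positive precisely when $\Theta(n,p,q) > 0$; in that case, as soon as $t \gtrsim \varepsilon^{-1/\Theta(n,p,q)}$ the base exceeds $1$ and letting $j\to\infty$ forces $V(t) = +\infty$, contradicting the existence of the local weak solution and delivering the lifespan bound $T(\varepsilon) \leq C\varepsilon^{-1/\Theta(n,p,q)}$.

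The critical case $\Theta(n,p,q) = 0$ is closed by the standard slice trick --- at a suitable step of the iteration, one polynomial factor in $t$ is upgraded to a logarithmic one --- and the same divergence mechanism then produces the double-exponential lifespan bound $T(\varepsilon) \leq \exp(C\varepsilon^{-(pq-1)})$. The main obstacle throughout is the classical damping $\partial_t u$ in the first equation: unlike in the fully scattering-damped system \eqref{eqs2} treated in \cite{PaTa20202}, where both equations admit clean pointwise lower bounds along the iteration, here the Duhamel formula $F'(t) = e^{-t}\bigl(F'(0) + \int_0^t e^s h(s)\,\mathrm{d}s\bigr)$ is not pointwise in $t$, so extracting a usable lower bound to feed into the iteration requires concentrating the integral on a unit interval $[t-1,t]$ and exploiting the monotonicity inherited from $V$ (via $(e^B V)' \geq 0$) to propagate the bound through the iteration.
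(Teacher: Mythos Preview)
Your outline shares the paper's skeleton: Yordanov--Zhang weight $\psi=e^{-t}\varphi$, the observation that $b\in L^1$ makes the multiplier $m(t)=\exp\bigl(-\int_t^\infty b\bigr)$ uniformly positive so the $v$-side behaves as for $b\equiv 0$, the baseline $V\gtrsim\varepsilon$ from nontrivial $v_1$, and an iteration with slicing. Two points, however, would prevent the argument from closing as written.

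First, $\psi$ solves the \emph{free} wave equation, not the adjoint of the classically damped one; and since the nonlinearity in the $v$-equation is $|\partial_t u|^q$, H\"older on $B_{R+t}$ returns $\bigl(\int_{\mathbb{R}^n}\partial_t u\,\psi\,\mathrm{d}x\bigr)^q$, not $F^q=\bigl(\int_{\mathbb{R}^n} u\,\psi\,\mathrm{d}x\bigr)^q$. The paper accordingly iterates on the pair $U_1\doteq\int_{\mathbb{R}^n}\partial_t u\,\psi\,\mathrm{d}x$ and $V_1\doteq\int_{\mathbb{R}^n}\partial_t v\,\psi\,\mathrm{d}x$.

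Second, and this is the substantive gap, your schematic frames are \emph{symmetric}, both of the form $\int_0^t(R+s)^{-A}(\cdot)\,\mathrm{d}s$. Such a pair cannot be derived here: testing the $u$-equation against $\psi$ produces $U_1'+2U_1=U_0+\int_{\mathbb{R}^n}|\partial_t v|^p\psi\,\mathrm{d}x$ (so the Duhamel formula you wrote is not the correct one), and hence the $U_1$-frame necessarily carries the $L^1$ kernel $e^{2(s-t)}$. This asymmetry is exactly what produces $\Theta(n,p,q)$ rather than the Glassey-system index $\Lambda(n,p,q)$---the $U_1$-step contributes no power of $t$ per cycle, only the $V_1$-step does. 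You allude to this in the last paragraph, but the proposed remedy, concentrating on a fixed window $[t-1,t]$ and invoking $(e^BV)'\ge 0$, does not work: that monotonicity is not available (what one actually obtains is $(mV_1)'+2mV_1\ge 0$, cf.\ \eqref{12-1}), and a unit shift at every step makes the slicing base points diverge, so one cannot pass to the limit $j\to\infty$ at fixed $t$. The paper instead uses a $j$-dependent window $[\,t/\ell_{j+1},t\,]$ with $\ell_{j+1}=1+(pq)^{-(j+1)}$; then $1-\exp\bigl(-2t(1-1/\ell_{j+1})\bigr)\gtrsim(pq)^{-2(j+1)}$ is absorbed harmlessly into the constants $C_j$, while the products $L_j=\prod_{k\le j}\ell_k$ stay bounded.
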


\begin{remark} The blow-up condition for \eqref{eqs} in the scale-invariant case in Theorem \ref{theorem1} is obtained by the condition $\Theta(n+\mu,p,q)\geq 0$ which is a shifted condition with respect to the blow-up condition for the Nakao problem, i.e. \eqref{eqs} when $b(t)=0$, obtained in \cite{Chen2022, PaTa2023}. On the other hand, for the scattering producing case the blow-up condition $\Theta(n,p,q)\geq 0$ is not altered by the presence of the damping term $b(t)\partial_t v$. Therefore, we found that for the Cauchy problem \eqref{eqs} the presence of a damping term in the $v$-equation (both in the scale-invariant case and in the scattering producing case) affects the blow-up range in an analogous way as for the corresponding single semilinear damped wave equation and weakly coupled systems of damped wave equations, that we discussed in the introduction.
\end{remark}

\section{Deduction of the iteration frame}
\subsection{Scale-invariant damping}\label{seccase1}

In this subsection, we consider the case $b(t)=\frac{\mu}{1+t}$.
Let us consider the adjoint equation to the linearized homogeneous version of the $v$-equation in \eqref{eqs}, namely,
\begin{equation}\label{conjugateeq}
\frac{\partial^2\psi}{\partial s^2 }(s,x)-\Delta\psi(s,x)-\frac{\partial}{\partial s}\Big(\frac{\mu}{1+s} \psi(s,x)\Big)=0.
\end{equation}
We consider a solution to \eqref{conjugateeq} with separate variables. 

Let $\varphi$ be the function introduced in \cite{Zh2006}, that is,
\begin{align} \label{phi YZ06}
\varphi(x)\doteq
\begin{cases}
  \mathrm{e}^{x}+\mathrm{e}^{-x}  & \mbox{if} \ n=1, \\
  \vphantom{\bigg(} \displaystyle{\int_{\mathbb{S}^{n-1}}\mathrm{e}^{x\cdot \omega}\mathrm{d}\sigma_\omega} & \mbox{if} \ n\geqslant 2.
 \end{cases}
\end{align} The function $\varphi$ satisfies the following properties: $\varphi \in \mathcal{C}^\infty(\mathbb{R}^n)$ is a positive function such that $\Delta \varphi =\varphi$ and $\varphi(x)\sim c_n |x|^{-\frac{n-1}{2}}\mathrm{e}^{|x|}$ as $|x|\to +\infty$. 

As time-dependent function, we consider the function $\rho$
\begin{align} \label{def rho(t)}
\rho(t)=(1+t)^\frac{\mu+1}{2}\mathrm{K}_{\ell}(1+t),
\end{align} where $\ell\doteq \frac{\vert\mu-1\vert}{2}$ and $\mathrm{K}_\ell$ denotes the modified Bessel function of the second kind of order $\ell$.
Since the function $\rho$ is a solution of the ODE
\begin{align}\label{rho equation}
\frac{\mathrm{d}^2\rho}{\mathrm{d} s^2 }(s)-\rho(s)-\frac{\mathrm{d}}{\mathrm{d} s}\Big(\frac{\mu}{1+s} \rho(s)\Big)=0,
\end{align} cf. \cite[Lemma 3.2]{LiPa2025}, it follows that the function 
\begin{equation}\label{psi2}
\Psi_2(s,x)\doteq \rho(s)\varphi(x)
\end{equation}
 is a solution to \eqref{conjugateeq}.
 We recall now some properties of the function $\mathrm{K}_\nu$, that we are going to use in the next computations. The integral representation
\begin{equation}\label{knu}
\mathrm{K}_\nu(t)=\int_0^\infty \mathrm{e}^{-t\cosh y}\cosh(\nu y)\mathrm{d}y, \ \nu\in\mathbb{R}
\end{equation} holds for any $t>0$ and any $\nu\in\mathbb{R}$ \cite[Equation
(10.32.9)]{Tricomi1953}, moreover, the asymptotic behavior for large times and the recursive relation for the derivative are given by
\begin{align}
%&\bullet\big(t^2\frac{\mathrm{d}^2}{\mathrm{d}t^2}+t\frac{\mathrm{d}}{\mathrm{d}t}-(t^2+\nu^2)\big)\mathrm{K}_\nu(t)=0,\\
& \mathrm{K}_\nu(t)=\sqrt{\frac{\pi}{2t}}\mathrm{e}^{-t}\bigl(1+O(t^{-1})\bigr)\qquad as \ t\rightarrow\infty,\label{prop1}\\
& \frac{\mathrm{d} \mathrm{K}_\nu}{\mathrm{d}t}(t)=-\mathrm{K}_{\nu+1}(t)+\frac{\nu}{t}\mathrm{K}_\nu(t)\label{prop2},
\end{align} see \cite[Equations (10.40.2) and (10.29.2)]{Tricomi1953}.

By using the asymptotic behavior of $\varphi$, we obtain the estimate in the next lemma.

\begin{lemma}\label{lemma3.3}
Let $r\geq 1$ and $R>0$, then, there exists a $C=C(n,r,R)>0$ such that
\begin{equation}
\int_{B_{R+t}}(\varphi(x))^{r}\mathrm{d}x\leq C\mathrm{e}^{rt}(R+t)^{(n-1)(1-\frac{r}{2})} 
\end{equation} for any $t\geq 0$.
\end{lemma}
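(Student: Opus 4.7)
The plan is to reduce the bound to a one-dimensional integral estimate via radial integration and then exploit the fact that $\mathrm{e}^{r|x|}$ concentrates the mass of $\varphi(x)^r$ near $|x|=R+t$. Combining the asymptotic $\varphi(x)\sim c_n|x|^{-(n-1)/2}\mathrm{e}^{|x|}$ as $|x|\to\infty$ with the continuity and positivity of $\varphi$ on any compact set, one first derives the global pointwise bound
\begin{equation*}
\varphi(x)\le C(1+|x|)^{-(n-1)/2}\mathrm{e}^{|x|},\qquad x\in\mathbb{R}^n,
\end{equation*}
with $C=C(n)$. Raising to the $r$-th power, passing to polar coordinates, and using $s^{n-1}\le (1+s)^{n-1}$ yields
\begin{equation*}
\int_{B_{R+t}}\varphi(x)^r\,\mathrm{d}x\le C\int_0^{R+t}(1+s)^{\gamma}\mathrm{e}^{rs}\,\mathrm{d}s,\qquad \gamma\doteq (n-1)\bigl(1-\tfrac{r}{2}\bigr).
\end{equation*}

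The core step is then the one-dimensional bound $J(A)\doteq\int_0^A(1+s)^{\gamma}\mathrm{e}^{rs}\,\mathrm{d}s\le C(1+A)^{\gamma}\mathrm{e}^{rA}$ for $A\ge 0$. When $\gamma\ge 0$ (i.e.\ $r\le 2$ or $n=1$) this is immediate from $(1+s)^{\gamma}\le (1+A)^{\gamma}$ on $[0,A]$ and $\int_0^A\mathrm{e}^{rs}\,\mathrm{d}s\le \mathrm{e}^{rA}/r$. When $\gamma<0$ a sharper argument is needed, since the naive bound $(1+s)^{\gamma}\le 1$ gives only $\mathrm{e}^{rA}/r$ and misses a factor $(1+A)^{-\gamma}\to\infty$. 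One substitutes $u=A-s$ to obtain $J(A)=\mathrm{e}^{rA}\int_0^A(1+A-u)^{\gamma}\mathrm{e}^{-ru}\,\mathrm{d}u$; on $[0,A/2]$ one has $1+A-u\ge (1+A)/2$, whence $(1+A-u)^{\gamma}\le 2^{|\gamma|}(1+A)^{\gamma}$ and the remaining exponential integrates to at most $1/r$, producing the main term of order $(1+A)^{\gamma}$; on $[A/2,A]$ the factor $\mathrm{e}^{-ru}\le \mathrm{e}^{-rA/2}$ dominates any polynomial in $A$, so this tail is negligible compared with $(1+A)^{\gamma}$.

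Finally, setting $A=R+t$, one writes $\mathrm{e}^{r(R+t)}=\mathrm{e}^{rR}\mathrm{e}^{rt}$, absorbs $\mathrm{e}^{rR}$ into the constant, and replaces $(1+R+t)^{\gamma}$ by $C_R(R+t)^{\gamma}$: for $\gamma<0$ this is immediate from $1+R+t\ge R+t$ and the monotonicity of $s\mapsto s^{\gamma}$, while for $\gamma\ge 0$ one uses $1+R+t\le (1+R^{-1})(R+t)$ valid for $t\ge 0$ and $R>0$. The resulting constant depends only on $n$, $r$, and $R$, as claimed. The only step that is not completely routine is the Laplace-type splitting for $\gamma<0$; all other manipulations are elementary.
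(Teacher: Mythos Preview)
Your proof is correct and follows exactly the route the paper indicates: the paper does not give a detailed proof of this lemma but merely states that it follows from the asymptotic behavior $\varphi(x)\sim c_n|x|^{-(n-1)/2}\mathrm{e}^{|x|}$, and your argument is a careful execution of precisely that idea. The reduction to the radial integral and the Laplace-type splitting for $\gamma<0$ are the standard way to carry this out, and all steps are sound.
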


Let us consider $\varepsilon>0$ and $u_0,u_1,v_0,v_1$ satisfying the assumptions of the statement of Theorem \ref{theorem1}. From now on in this subsection, $(u,v)$ is a given local solution  to \eqref{eqs} on $[0,T)$. We consider the following functionals
\begin{align*}
& && U_0(t)\doteq \int_{\mathbb{R}^n}u(t,x)\Psi_1(t,x)\mathrm{d}x, && V_0(t)\doteq \int_{\mathbb{R}^n}v(t,x)\Psi_2(t,x)\mathrm{d}x, && \\
& && U_1(t)\doteq \int_{\mathbb{R}^n}\partial_tu(t,x)\Psi_1(t,x)\mathrm{d}x, && V_1(t)\doteq \int_{\mathbb{R}^n}\partial_tv(t,x)\Psi_2(t,x)\mathrm{d}x, && 
\end{align*}
where $\Psi_1(t,x)\doteq \mathrm{e}^{-t}\varphi(x)$ is the solution of the free wave equation $\partial_t^2\Psi_1-\Delta\Psi_1=0$ introduced for the first time in \cite{Zh2006} to study the critical semilinear wave equation, and $\Psi_2$ is defined by \eqref{psi2}.
We work fist with the functionals associated to $u$. Hence, we begin by proving the nonnegativity of $U_0$.
\begin{proposition}\label{nonnegativity}
Let $u_0\in W^{1,1}_{\mathrm{loc}}(\mathbb{R}^n), u_1\in L^1_{\mathrm{loc}}(\mathbb{R}^n)$ be compactly supported functions such that $U_0(0),U'_0(0)\geq 0$. Then, $U_0(t)\geq 0$ for any $t\in[0,T)$.
\end{proposition}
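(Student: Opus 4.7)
The plan is to derive a constant-coefficient linear ODE with nonnegative forcing satisfied by $U_0$ and then to exploit the fact that both roots of the associated characteristic polynomial are real and negative in order to propagate nonnegativity from the initial data to every $t\in[0,T)$. The key ingredients are the identities $\partial_t\Psi_1 = -\Psi_1$ and $\Delta\Psi_1 = \Psi_1$, the equation $\partial_t^2 u -\Delta u + \partial_t u = |\partial_t v|^p$, and integration by parts in $x$, which is legitimate because $u(t,\cdot)$ is compactly supported by \eqref{support condition sol}.

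Differentiating $U_0$ under the integral sign and using $\partial_t\Psi_1=-\Psi_1$ gives $U_0'(t) = U_1(t) - U_0(t)$. Differentiating $U_1$ once more and substituting the $u$-equation yield
\begin{equation*}
U_1'(t) = U_0(t) - 2 U_1(t) + G(t), \qquad G(t) \doteq \int_{\mathbb{R}^n} |\partial_t v(t,x)|^p \Psi_1(t,x) \,\mathrm{d}x \geq 0.
\end{equation*}
Eliminating $U_1$ produces the ODE $U_0''(t) + 3 U_0'(t) + U_0(t) = G(t)$, whose characteristic polynomial $\lambda^2 + 3\lambda + 1$ has the two negative real roots $\lambda_\pm = \tfrac{-3\pm\sqrt{5}}{2}$. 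Setting $Y(t) \doteq U_0'(t) - \lambda_- U_0(t)$, the ODE becomes $Y'(t) - \lambda_+ Y(t) = G(t)$; since $Y(0) = U_0'(0) + |\lambda_-|\, U_0(0) \geq 0$ by hypothesis and $G \geq 0$, Duhamel's formula gives $Y(t) \geq e^{\lambda_+ t} Y(0) \geq 0$. The first-order inequality $U_0'(t) - \lambda_- U_0(t) \geq 0$ together with $U_0(0) \geq 0$ then yields $U_0(t) \geq e^{\lambda_- t} U_0(0) \geq 0$ via an integrating-factor argument.

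The main technical obstacle is that $\Psi_1$ is not a legitimate test function in Definition \ref{engdef1}, since $\varphi$ grows exponentially in $x$. To bypass this, one works in \eqref{inteeq21} with the truncated test $\chi_R(x)\Psi_1(s,x)\eta(s)$, where $\chi_R \in \mathcal{C}_0^\infty(\mathbb{R}^n)$ equals $1$ on a ball large enough to contain $\mathrm{supp}\, u(s,\cdot)$ for every $s\leq t$ and $\eta\in \mathcal{C}_0^\infty([0,T))$ is a time cutoff; by \eqref{support condition sol} the derivatives of $\chi_R$ meet only the zero set of $u$ and therefore contribute nothing, so the resulting identity rigorously reproduces each step of the formal calculation above and delivers the ODE for $U_0$ in the distributional sense. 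A standard bootstrap then upgrades $U_0$ to a $\mathcal{C}^2$ function satisfying the ODE pointwise, completing the argument.
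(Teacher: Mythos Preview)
Your argument is correct and follows essentially the same route as the paper: both derive the second–order ODE $U_0''+3U_0'+U_0=G$ with $G\geq 0$, factor it through the two real roots of the characteristic polynomial, and propagate nonnegativity from the initial data. The paper carries out the double integration to obtain the explicit representation \eqref{representation U0} and then reads off the sign, whereas you run the equivalent two-step integrating-factor/comparison argument directly; these are the same computation written two ways. Your handling of the test-function issue (spatial cutoff that is invisible on $\mathrm{supp}\,u(t,\cdot)$) also matches what the paper does implicitly when it plugs $\Psi_1$ into \eqref{inteeq13}.
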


\begin{proof}
If we perform a further step of integration by parts, by \eqref{inteeq11} we get
\begin{align}
&\int_0^t \int_{\mathbb{R}^n}-\left[(\partial_s u(s,x)+u(s,x))\partial_s\phi(s,x)+ u(s,x)\Delta\phi(s,x)\right]\mathrm{d}x\mathrm{d}s +\int_{\mathbb{R}^n}(\partial_tu(t,x)+u(t,x))\phi(t,x)\mathrm{d}x \notag \\
& \qquad =\varepsilon\int_{\mathbb{R}^n}(u_1(x)+u_0(x))\phi(0,x)\mathrm{d}x
+\int_0^t\int_{\mathbb{R}^n}\vert \partial_sv(s,x)\vert^p\phi(s,x)\mathrm{d}x\mathrm{d}s.\label{inteeq13}
\end{align}
We choose $\phi=\Psi_1$ as test function in \eqref{inteeq13}. We stress that this can be done even if $\Psi_1$ is not compactly supported, thanks to the support condition \eqref{support condition sol}. Since $\partial_t\Psi_1=-\Psi_1$ and $\Delta\Psi_1=\Psi_1$, it results
\begin{align*}
&\int_0^t\int_{\mathbb{R}^n}\partial_su(s,x)\Psi_1(s,x)\mathrm{d}x\mathrm{d}s+\int_{\mathbb{R}^n}\big(\partial_tu(t,x)+u(t,x)\big)
\Psi_1(t,x)\mathrm{d}x\\
&=\varepsilon\int_{\mathbb{R}^n}\big(u_0(x)+u_1(x)\big)\varphi(x)\mathrm{d}x+\int_0^t\int_{\mathbb{R}^n}
\vert\partial_sv(s,x)\vert^p\Psi_1(s,x)\mathrm{d}x\mathrm{d}s,
\end{align*} that is,
\begin{align}\label{6}
\int_0^t U_1(s) \mathrm{d}s+U_1(t)+U_0(t)= \varepsilon\int_{\mathbb{R}^n}\big(u_0(x)+u_1(x)\big)\varphi(x)\mathrm{d}x+\int_0^t\int_{\mathbb{R}^n}
\vert\partial_sv(s,x)\vert^p\Psi_1(s,x)\mathrm{d}x\mathrm{d}s.
\end{align}
Since $U_1(t)=U_0^\prime(t)+U_0(t)$, \eqref{6} can be rewritten as
\begin{align}\label{8}
&\int_0^tU_0(s)\mathrm{d}s+U_0^\prime(t)+3U_0(t)
=\varepsilon\int_{\mathbb{R}^n}\big(2u_0(x)+u_1(x)\big)\varphi(x)\mathrm{d}x +\int_0^t\int_{\mathbb{R}^n}
\vert\partial_sv(s,x)\vert^p\Psi_1(s,x)\mathrm{d}x\mathrm{d}s.
\end{align}
Differentiating \eqref{8} with respect to $t$, we have
\begin{equation}\label{9}
U_0^{\prime\prime}(t)+3U_0^\prime(t)+U_0(t)=\int_{\mathbb{R}^n}
\vert\partial_tv(t,x)\vert^p\Psi_1(t,x)\mathrm{d}x.
\end{equation}
We factorize the second-order differential operator $\frac{\mathrm{d}^2}{\mathrm{d}t^2}+3\frac{\mathrm{d}}{\mathrm{d}t}+1$ as follows
\begin{align*}
\mathrm{e}^{-\alpha_1t}\frac{\mathrm{d}}{\mathrm{d}t}\Big(\mathrm{e}^{(\alpha_1-\alpha_2)t}\frac{\mathrm{d}}{\mathrm{d}t}\big(\mathrm{e}^{\alpha_2t}U_0(t)\big)\Big)
&=U_0^{\prime\prime}(t)+(\alpha_1+\alpha_2)U_0^\prime(t)+\alpha_1\alpha_2U_0(t)\\
&=U_0^{\prime\prime}(t)+3U_0^\prime(t)+U_0(t),
\end{align*}
where $\alpha_1,\alpha_2$ solve the quadratic equation $\alpha^2-3\alpha+1=0$. %,     that is  $\alpha_{1,2}=\frac{3\pm\sqrt{5}}{2}$. 
From \eqref{9}, it follows that
\begin{equation}\label{91}
\mathrm{e}^{-\alpha_1t}\frac{\mathrm{d}}{\mathrm{d}t}\Big(\mathrm{e}^{(\alpha_1-\alpha_2)t}\frac{d}{dt}\big(\mathrm{e}^{\alpha_2t}U_0(t)\big)\Big)
=\int_{\mathbb{R}^n}
\vert\partial_tv(t,x)\vert^p\Psi_1(t,x)\mathrm{d}x.
\end{equation}
From \eqref{91}, elementary algebraic steps and an integration over $[0,t]$ lead to 
\begin{align*}
&\frac{\mathrm{d}}{\mathrm{d}t}\big(\mathrm{e}^{\alpha_2t}U_0(t)\big)=\mathrm{e}^{(\alpha_2-\alpha_1)t}\big(U_0^\prime(0)+\alpha_2U_0(0)\big)
+\mathrm{e}^{(\alpha_2-\alpha_1)t}\int_0^t\mathrm{e}^{\alpha_1s}\int_{\mathbb{R}^n}\vert\partial_s v(s,x)\vert^p\Psi_1(s,x)\mathrm{d}x\mathrm{d}s.
\end{align*} By integrating over $[0,t]$ the last equation, we obtain
\begin{align*}
 \mathrm{e}^{\alpha_2t}U_0(t)-U_0(0) & =\frac{\mathrm{e}^{(\alpha_2-\alpha_1)t}-1}{\alpha_2-\alpha_1}\big(U_0^\prime(0)+\alpha_2U_0(0)\big) \\ & \qquad +\int_0^t\mathrm{e}^{(\alpha_2-\alpha_1)s}\int_0^s\mathrm{e}^{\alpha_1\tau}\int_{\mathbb{R}^n}\vert\partial_\tau v(\tau,x)\vert^p\Psi_1(\tau,x)\mathrm{d}x\mathrm{d}\tau \mathrm{d}x.
\end{align*}
Hence,
\begin{align}
U_0(t) & =\frac{\alpha_1\mathrm{e}^{-\alpha_2t}-\alpha_2\mathrm{e}^{-\alpha_1t}}{\alpha_1-\alpha_2}U_0(0)+
\frac{\mathrm{e}^{-\alpha_2t}-\mathrm{e}^{-\alpha_1t}}{\alpha_1-\alpha_2}U_0^\prime(0) \notag\\
& \qquad +\mathrm{e}^{-\alpha_2t}\int_0^t\mathrm{e}^{(\alpha_2-\alpha_1)s}\int_0^s\mathrm{e}^{\alpha_1\tau}\int_{\mathbb{R}^n}\vert\partial_\tau v(\tau,x)\vert^p\Psi_1(\tau,x)\mathrm{d}x\mathrm{d}\tau \mathrm{d}s.\label{representation U0}
\end{align}
Since the $t$-dependent functions multiplying $U_0(0)$ and $U_0'(0)$ in the previous equation are nonnegative, from \eqref{representation U0} it follows the nonnegativity of $U_0$. We underline that, in this last step, we used the assumptions on the data $U_0(0),U'_0(0)\geq 0$ and the fact that the nonlinear term $|\partial_t v|^p$ and the test function $\Psi_1$ are nonnegative.
\end{proof}

\begin{proposition}\label{proposition2}
Let $u_0\in W^{1,1}_{\mathrm{loc}}(\mathbb{R}^n), u_1\in L^1_{\mathrm{loc}}(\mathbb{R}^n)$ be compactly supported functions such that $U_0(0),U'_0(0), U_1(0)\geq 0$. Then, for any $t\in[0,T)$
\begin{equation}\label{u1t}
U_1(t)\geq\int_0^t\mathrm{e}^{2(s-t)}\int_{\mathbb{R}^n}\vert\partial_sv(s,x)\vert^p\Psi_1(s,x)\mathrm{d}x\mathrm{d}s.
\end{equation}
\end{proposition}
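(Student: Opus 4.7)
The plan is to derive a first-order linear differential inequality for $U_1$ by combining the second-order ODE for $U_0$ already obtained in the proof of Proposition \ref{nonnegativity} with the algebraic relation $U_1=U_0'+U_0$, and then to solve this inequality using the standard integrating factor and the nonnegativity results already at our disposal.

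More precisely, the key steps are as follows. First, I would recall from \eqref{9} that
\begin{equation*}
U_0''(t)+3U_0'(t)+U_0(t)=F(t), \qquad \text{where} \ F(t)\doteq \int_{\mathbb{R}^n}|\partial_t v(t,x)|^p \Psi_1(t,x)\, \mathrm{d}x.
\end{equation*}
Differentiating the identity $U_1=U_0'+U_0$ and adding $2U_1$ to the result, one finds
\begin{equation*}
U_1'(t)+2U_1(t)=U_0''(t)+3U_0'(t)+2U_0(t)=F(t)+U_0(t).
\end{equation*}
By Proposition \ref{nonnegativity}, the hypotheses $U_0(0),U_0'(0)\geq 0$ imply $U_0(t)\geq 0$ on $[0,T)$, so this reduces to the differential inequality $U_1'(t)+2U_1(t)\geq F(t)$.

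Next, I would multiply by the integrating factor $\mathrm{e}^{2t}$ to obtain $(\mathrm{e}^{2t}U_1(t))'\geq \mathrm{e}^{2t}F(t)$, and then integrate from $0$ to $t$. This gives
\begin{equation*}
U_1(t)\geq \mathrm{e}^{-2t}U_1(0)+\int_0^t \mathrm{e}^{2(s-t)}F(s)\, \mathrm{d}s.
\end{equation*}
The assumption $U_1(0)\geq 0$ then allows us to drop the first term on the right-hand side, yielding exactly \eqref{u1t}. Since each manipulation is entirely elementary, I do not anticipate any real obstacle; the only subtlety is to notice that $U_1=U_0'+U_0$ transforms the second-order equation for $U_0$ into a first-order equation for $U_1$ with a nonnegative perturbation $U_0(t)$, which is precisely why the nonnegativity from Proposition \ref{nonnegativity} is indispensable.
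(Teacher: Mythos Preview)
Your proof is correct and follows essentially the same approach as the paper: the paper also rewrites \eqref{9} via $U_1=U_0'+U_0$ as $U_1'(t)+2U_1(t)=U_0(t)+F(t)$, applies the integrating factor $\mathrm{e}^{2t}$, integrates, and then discards the nonnegative terms $\mathrm{e}^{-2t}U_1(0)$ and $\int_0^t \mathrm{e}^{2(s-t)}U_0(s)\,\mathrm{d}s$ using $U_1(0)\geq 0$ and Proposition~\ref{nonnegativity}. The only cosmetic difference is that you drop $U_0(t)$ before integrating, whereas the paper keeps the exact representation and drops it afterwards.
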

\begin{proof}
By using  $U_1(t)=U_0^\prime(t)+U_0(t)$, we can rewrite \eqref{9} as
\begin{equation*}
U_1^\prime(t)+2U_1(t)-U_0(t)=\int_{\mathbb{R}^n}\vert\partial_tv(t,x)\vert^p\Psi_1(t,x)\mathrm{d}x.
\end{equation*}
Thus,
\begin{equation}\label{re9}
\mathrm{e}^{-2t}\frac{\mathrm{d}}{\mathrm{d}t}\big(\mathrm{e}^{2t}U_1(t)\big)=U_0(t)+\int_{\mathbb{R}^n}\vert\partial_tv(t,x)\vert^p\Psi_1(t,x)\mathrm{d}x.
\end{equation}
From \eqref{re9}, we arrive at
\begin{equation*}
U_1(t)= \mathrm{e}^{-2t}U_1(0)+\mathrm{e}^{-2t}\int_0^t\mathrm{e}^{2s}U_0(s)\mathrm{d}s+\mathrm{e}^{-2t}\int_0^t\mathrm{e}^{2s}\int_{\mathbb{R}^n}
\vert\partial_sv(s,x)\vert^p\Psi_1(s,x)\mathrm{d}x\mathrm{d}s,
\end{equation*}
therefore, \eqref{u1t} follows immediately from the nonnegativity of $U_1(0)$ and Proposition \ref{nonnegativity}.
\end{proof}

Next, we consider the functionals associated with $v$. In the next results, we follow the approach from \cite{HaHa2021} to deal with the scale-invariant damping term. By employing the asymptotic behavior of $\mathrm{K}_\nu(t)$ as $t\rightarrow\infty$, we have
\begin{align}
\rho(t)=\sqrt{\frac{\pi}{2\mathrm{e}^2}}(1+t)^{\frac{\mu}{2}}\mathrm{e}^{-t}\Big(1+O\big((1+t)^{-1}\big)\Big) \qquad  \text{as}\ t\rightarrow\infty.\label{12}
\end{align}
Consequently, there exist $T_1>0$ and $C_1>0$ such that for any $t\geq T_1$
\begin{align}
\frac{1}{C_1}\mathrm{e}^{-2t}\leq \rho^2(t)(1+t)^{-\mu}\leq C_1\mathrm{e}^{-2t}.\label{15}
\end{align}

First, we prove two preliminary lemmas.
\begin{lemma}\label{lem1} Let $\rho$ be the function definde in \eqref{def rho(t)}. Then, for any $t\geq 0:$ $$\frac{\mu}{1+t}-\frac{\rho^\prime(t)}{\rho(t)}>0.$$
\end{lemma}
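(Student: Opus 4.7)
My plan is to compute $\rho'(t)/\rho(t)$ explicitly using \eqref{prop2} and then verify the inequality case-by-case on the sign of $\mu-1$ (since $\ell=|\mu-1|/2$ is defined by an absolute value, the sign of $\mu-1$ will govern the structure).

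Setting $s=1+t$ so that $\rho(t)=s^{(\mu+1)/2}\mathrm{K}_\ell(s)$, the product rule combined with \eqref{prop2} gives
\begin{align*}
\rho'(t) & =\frac{\mu+1}{2}\,s^{(\mu-1)/2}\mathrm{K}_\ell(s)+s^{(\mu+1)/2}\Bigl(-\mathrm{K}_{\ell+1}(s)+\frac{\ell}{s}\mathrm{K}_\ell(s)\Bigr)\\
 & =\Bigl(\frac{\mu+1}{2}+\ell\Bigr)s^{(\mu-1)/2}\mathrm{K}_\ell(s)-s^{(\mu+1)/2}\mathrm{K}_{\ell+1}(s),
\end{align*}
so that
\begin{align*}
\frac{\mu}{1+t}-\frac{\rho'(t)}{\rho(t)}=\frac{1}{s}\Bigl(\mu-\frac{\mu+1}{2}-\ell\Bigr)+\frac{\mathrm{K}_{\ell+1}(s)}{\mathrm{K}_\ell(s)}=\frac{1}{s}\Bigl(\frac{\mu-1}{2}-\ell\Bigr)+\frac{\mathrm{K}_{\ell+1}(s)}{\mathrm{K}_\ell(s)}.
\end{align*}

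Now I would split on the sign of $\mu-1$. When $\mu\geq 1$, we have $\ell=(\mu-1)/2$, the bracketed term vanishes, and positivity is immediate from $\mathrm{K}_\nu>0$ on $(0,\infty)$ (clear from the integral representation \eqref{knu}). The harder case is $\mu<1$: then $\ell=(1-\mu)/2$, so $\frac{\mu-1}{2}-\ell=\mu-1<0$ and I need to show that $\mathrm{K}_{\ell+1}(s)/\mathrm{K}_\ell(s)$ dominates $(1-\mu)/s=2\ell/s$. For this I would invoke the standard recurrence $\mathrm{K}_{\ell+1}(s)=\mathrm{K}_{\ell-1}(s)+\frac{2\ell}{s}\mathrm{K}_\ell(s)$ (derivable by combining \eqref{prop2} for $\nu=\ell$ and $\nu=-\ell$, or cited from \cite{Tricomi1953}), which gives
\begin{equation*}
\frac{\mathrm{K}_{\ell+1}(s)}{\mathrm{K}_\ell(s)}=\frac{\mathrm{K}_{\ell-1}(s)}{\mathrm{K}_\ell(s)}+\frac{2\ell}{s}>\frac{2\ell}{s},
\end{equation*}
again using $\mathrm{K}_{\ell-1}=\mathrm{K}_{|\ell-1|}>0$. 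Plugging this into the expression above yields the strict positivity claimed in the lemma.

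In both cases the inequality is strict for all $s=1+t\geq 1$, which covers $t\geq 0$ as required. The only real obstacle is recognising that the constant term arising from differentiation exactly cancels in the case $\mu\geq 1$ and falls short by $2\ell/s$ in the case $\mu<1$, so that the proof reduces to the Bessel recurrence; aside from this, everything follows from the two formulas \eqref{prop2} and the positivity of $\mathrm{K}_\nu$.
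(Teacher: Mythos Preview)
Your proof is correct and follows the same overall structure as the paper's: compute $\rho'/\rho$ via \eqref{prop2}, obtain the identity
\[
\frac{\mu}{1+t}-\frac{\rho'(t)}{\rho(t)}=\frac{\mu-1-|\mu-1|}{2(1+t)}+\frac{\mathrm{K}_{\ell+1}(1+t)}{\mathrm{K}_\ell(1+t)},
\]
and then split on the sign of $\mu-1$. The case $\mu\geq 1$ is identical in both. For $\mu<1$ the arguments diverge slightly: the paper uses the monotonicity of $\nu\mapsto\mathrm{K}_\nu(t)$ (read off directly from the integral representation \eqref{knu}) to get $\mathrm{K}_{\ell+1}/\mathrm{K}_\ell\geq 1$, and then bounds the negative term crudely by $\frac{\mu-1}{1+t}\geq \mu-1>-1$. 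You instead invoke the three-term recurrence $\mathrm{K}_{\ell+1}=\mathrm{K}_{\ell-1}+\tfrac{2\ell}{s}\mathrm{K}_\ell$ to get $\mathrm{K}_{\ell+1}/\mathrm{K}_\ell>2\ell/s$, which cancels the negative term exactly. Your route is a touch sharper (it would work for all $s>0$, not just $s\geq 1$) but requires one extra Bessel identity beyond what the paper records; the paper's route stays entirely within \eqref{knu}--\eqref{prop2}. Either way the lemma follows.
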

\begin{proof}
By using the recursive relation in \eqref{prop2}, we find that
\begin{equation*}
\begin{aligned}
\rho^\prime(t)&=\tfrac{\mu+1}{2}(1+t)^{\frac{\mu-1}{2}}\mathrm{K}_\ell(1+t)+(1+t)^{\frac{\mu+1}{2}}
\Big(-\mathrm{K}_{\ell+1}(1+t)+\tfrac{\vert\mu-1\vert}{2(1+t)}\mathrm{K}_\ell(1+t)\Big)\\
&=(1+t)^{\frac{\mu+1}{2}}\Big(\tfrac{\mu+1+\vert\mu-1\vert}{2(1+t)}\mathrm{K}_\ell(1+t)-\mathrm{K}_{\ell+1}(1+t)\Big),
\end{aligned}
\end{equation*}
where we used that $\ell=\frac{\vert\mu-1\vert}{2}$. Then,
\begin{align}\label{rho'/rho}
\frac{\rho^\prime(t)}{\rho(t)}= \frac{\mu+1+\vert\mu-1\vert}{2(1+t)}-\frac{\mathrm{K}_{\ell+1}(1+t)}{\mathrm{K}_{\ell}(1+t)}
\end{align}
and, consequently,
\begin{equation}\label{relar4}
\frac{\mu}{1+t}-\frac{\rho^\prime(t)}{\rho(t)}=\frac{\mu-1-\vert\mu-1\vert}{2(1+t)}+\frac{\mathrm{K}_{\ell+1}(1+t)}{\mathrm{K}_\ell(1+t)}.
\end{equation}
By \eqref{knu} we see that the function $\mathrm{K}_\nu(t)$ is increasing with respect to $\nu$ (fixed $t>0$), therefore, $\frac{\mathrm{K}_{\ell+1}(1+t)}{\mathrm{K}_\ell(1+t)}\geq 1$ for any $t\geqslant 0$. On the other hand, for any $t\geq 0$, when $ \mu\geq 1$ we have
\begin{align*}
\frac{\mu-1-\vert\mu-1\vert}{2(1+t)}=0,
\end{align*}  while when $\mu\in (0,1)$ it holds
\begin{align*} 
\frac{\mu-1-\vert\mu-1\vert}{2(1+t)}=\frac{\mu-1}{1+t}\geq \mu-1>-1.
\end{align*}
Summarizing, by \eqref{relar4} we conclude that
\begin{equation*} %\label{conclusion}
\frac{\mu}{1+t}-\frac{\rho^\prime(t)}{\rho(t)}> -1+\frac{\mathrm{K}_{\ell+1}(1+t)}{\mathrm{K}_\ell(1+t)}\geq0,
\end{equation*} which is the desired inequality.
\end{proof}

\begin{lemma}\label{le33}
Let $\rho$ be the function defined in \eqref{def rho(t)}.
Then, there exists $T_2>0$ such that $$1\leq\frac{\mu}{1+t}-\frac{2\rho^\prime(t)}{\rho(t)}\leq 3$$ for any $t\geq T_2$.
\end{lemma}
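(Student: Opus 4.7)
The plan is to combine the explicit expression for $\rho'/\rho$ obtained in the proof of Lemma \ref{lem1} with the large-time asymptotics of the modified Bessel functions recalled in \eqref{prop1}, and then conclude by a simple continuity argument.

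First, I would substitute \eqref{rho'/rho} into the quantity $\tfrac{\mu}{1+t}-\tfrac{2\rho'(t)}{\rho(t)}$. After cancellation of the $\mu$ terms, this yields
\begin{align*}
\frac{\mu}{1+t}-\frac{2\rho^\prime(t)}{\rho(t)}=-\frac{1+|\mu-1|}{1+t}+\frac{2\mathrm{K}_{\ell+1}(1+t)}{\mathrm{K}_\ell(1+t)}.
\end{align*}
The first term on the right-hand side clearly tends to $0$ as $t\to\infty$. For the second term, one applies the asymptotic expansion \eqref{prop1} to both $\mathrm{K}_{\ell+1}(1+t)$ and $\mathrm{K}_\ell(1+t)$: each behaves like $\sqrt{\pi/(2(1+t))}\,\mathrm{e}^{-(1+t)}(1+O((1+t)^{-1}))$, so their ratio converges to $1$ as $t\to\infty$.

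Combining the two contributions, one obtains
\begin{align*}
\lim_{t\to\infty}\left(\frac{\mu}{1+t}-\frac{2\rho^\prime(t)}{\rho(t)}\right)=2.
\end{align*}
Since the function $t\mapsto \tfrac{\mu}{1+t}-\tfrac{2\rho'(t)}{\rho(t)}$ is continuous on $[0,\infty)$ and the limit $2$ lies strictly inside the interval $[1,3]$, there exists $T_2>0$ such that the inequality $1\leq \tfrac{\mu}{1+t}-\tfrac{2\rho'(t)}{\rho(t)}\leq 3$ is satisfied for every $t\geq T_2$. This would conclude the proof.

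There is no real obstacle in this argument: the only care needed is in the bookkeeping of the term $|\mu-1|$ inside the formula \eqref{rho'/rho}, which however disappears in the limit since it only enters through the $(1+t)^{-1}$ decaying term. The key analytical input—the fact that $\mathrm{K}_{\ell+1}(t)/\mathrm{K}_{\ell}(t)\to 1$—is immediate from the standard asymptotic \eqref{prop1}, so the lemma reduces to an elementary continuity statement.
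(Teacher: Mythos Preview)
Your proof is correct and follows exactly the same approach as the paper: both compute the expression via \eqref{rho'/rho}, use the asymptotics \eqref{prop1} to show the limit is $2$, and conclude by continuity. Your version simply spells out a few more details than the paper's one-line argument.
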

\begin{proof}
It is a straightforward consequence of the fact that
\begin{equation*}
\frac{\mu}{1+t}-\frac{2\rho^\prime(t)}{\rho(t)}=-\frac{1+\vert\mu-1\vert}{1+t}+
\frac{2\mathrm{K}_{\ell+1}(1+t)}{\mathrm{K}_\ell(1+t)}\longrightarrow  2\qquad \text{as}\ t\rightarrow\infty,
\end{equation*}
where we used \eqref{rho'/rho} and \eqref{prop1}.
\end{proof}

\begin{proposition}\label{propo3}
Let $v_0\in W^{1,1}_{\mathrm{loc}}(\mathbb{R}^n), v_1\in L^1_{\mathrm{loc}}(\mathbb{R}^n)$ be nonnegative and compactly supported functions such that $v_0$ is nontrivial. Then, for any $t\in[0,T)$
\begin{equation}\label{23}
V_0(t)\geq K_0\, \varepsilon,
\end{equation}
 where $K_0=K_0(n,v_0,v_1,\mu)>0$ is independent of $\varepsilon$.
\end{proposition}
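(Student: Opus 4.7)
The plan is to take $\psi=\Psi_2$ as test function in the integral identity \eqref{inteeq22}. Even though $\Psi_2$ is not compactly supported, this is admissible thanks to the support condition \eqref{support condition sol} on $v$ (exactly as done with $\Psi_1$ in the proof of Proposition \ref{nonnegativity}). Since $\Psi_2$ solves the adjoint equation \eqref{conjugateeq}, the double integral containing $v$ on the left-hand side vanishes, and using $\Psi_2(s,x)=\rho(s)\varphi(x)$, $\partial_t\Psi_2(s,x)=\rho'(s)\varphi(x)$, together with the identities
\begin{align*}
V_1(t)=V_0'(t)-\frac{\rho'(t)}{\rho(t)}V_0(t), \qquad \int_{\mathbb{R}^n} v\,\partial_t\Psi_2\, \mathrm{d}x=\frac{\rho'(t)}{\rho(t)}V_0(t),
\end{align*}
one obtains the first-order linear ODE
\begin{align*}
V_0'(t)+\left(\frac{\mu}{1+t}-\frac{2\rho'(t)}{\rho(t)}\right)V_0(t)=\varepsilon D_0+\int_0^t\int_{\mathbb{R}^n}|\partial_s u(s,x)|^q\Psi_2(s,x)\,\mathrm{d}x\,\mathrm{d}s,
\end{align*}
where $D_0=\rho(0)\int(\mu v_0+v_1)\varphi\,\mathrm{d}x-\rho'(0)\int v_0\varphi\,\mathrm{d}x=\rho(0)\big(\mu-\tfrac{\rho'(0)}{\rho(0)}\big)\int v_0\varphi\,\mathrm{d}x+\rho(0)\int v_1\varphi\,\mathrm{d}x$.

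Next I would verify that $D_0>0$: both summands are nonnegative (the coefficient $\mu-\rho'(0)/\rho(0)$ is strictly positive by Lemma \ref{lem1} at $t=0$ and $v_0$ is nontrivial and nonnegative, $v_1$ is nonnegative). The integrating factor of the ODE is
\begin{align*}
M(t)\doteq\exp\left(\int_0^t\left(\frac{\mu}{1+s}-\frac{2\rho'(s)}{\rho(s)}\right)\mathrm{d}s\right)=\frac{(1+t)^\mu\rho^2(0)}{\rho^2(t)},
\end{align*}
so the variation of constants formula together with the nonnegativity of the nonlinear term yields
\begin{align*}
V_0(t)\geq \frac{1}{M(t)}\left(V_0(0)+\varepsilon D_0\int_0^t M(s)\,\mathrm{d}s\right).
\end{align*}

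The main obstacle is then to turn this into a uniform positive lower bound of order $\varepsilon$ for all $t\in[0,T)$. I would split into two regimes. For large $t$, the asymptotic \eqref{15} gives, after a short computation, $\int_0^t M(s)\,\mathrm{d}s / M(t)\geq (1-\mathrm{e}^{2(T_1-t)})/(2C_1^2)$, hence there is $T^*>T_1$ and a constant $c_1>0$ such that $V_0(t)\geq c_1\varepsilon D_0$ for every $t\geq T^*$. For $t\in[0,T^*]$, since $V_0(0)=\varepsilon\rho(0)\int v_0\varphi\,\mathrm{d}x>0$ and $M$ is continuous and positive on $[0,T^*]$, dropping the $D_0$-term gives $V_0(t)\geq V_0(0)/\max_{[0,T^*]}M$, which is again of order $\varepsilon$ with an $\varepsilon$-independent constant. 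Taking $K_0$ to be the minimum of the two constants obtained in the two regimes (both depending only on $n,\mu,v_0,v_1$) yields the claimed inequality $V_0(t)\geq K_0\varepsilon$.
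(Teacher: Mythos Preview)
Your proof is correct and follows essentially the same route as the paper: derive the first-order ODE for $V_0$ by testing with $\Psi_2$, use Lemma \ref{lem1} to get positivity of the data term, apply the integrating factor $(1+t)^\mu/\rho^2(t)$, and then split into a bounded-time regime (handled via $V_0(0)>0$ and continuity) and a large-time regime (handled via the $\varepsilon D_0$ term and the asymptotics \eqref{15}). The only cosmetic difference is that the paper shrinks the large-time integral to $[t/2,t]$ rather than $[T_1,t]$, but both choices give the same uniform lower bound.
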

\begin{proof}
Being  $\Psi_2$ a solution to \eqref{conjugateeq},  choosing $\psi=\Psi_2$ into \eqref{inteeq22} we obtain
\begin{align*}
&\int_{\mathbb{R}^n}\left(\partial_t v(t,x)\Psi_2(t,x)-v(t,x)\partial_t\Psi_2(t,x)+\frac{\mu}{1+t}v(t,x)\Psi_2(t,x)\right)\mathrm{d}x\\
&=\varepsilon\rho(0)\int_{\mathbb{R}^n}\left(v_1(x)+\left(\mu-\frac{\rho^\prime(0)}{\rho(0)}\right)v_0(x)\right)\varphi (x)\mathrm{d}x+
\int_0^t\int_{\mathbb{R}^n}\vert \partial_su(s,x)\vert^q\Psi_2(s,x)\mathrm{d}x\mathrm{d}s.
\end{align*}
The previous equation can be rewritten as
\begin{align}
V_0^\prime(t)+\left(\frac{\mu}{1+t}-\frac{2\rho^\prime(t)}{\rho(t)}\right)V_0(t)  & =\varepsilon I_\mu[v_0,v_1]+
\int_0^t\int_{\mathbb{R}^n}\vert \partial_su(s,x)\vert^q\Psi_2(s,x)\mathrm{d}x\mathrm{d}s, \label{25}
\end{align}
where 
\begin{align*}
I_\mu[v_0,v_1]\doteq \rho(0)\int_{\mathbb{R}^n}\left(v_1(x)+\left(\mu-\frac{\rho^\prime(0)}{\rho(0)}\right)v_0(x)\right)\varphi (x)\mathrm{d}x. 
\end{align*}
Using Lemma \ref{lem1} and the assumptions on the functions $v_0,v_1$, we have $I_\mu[v_0,v_1]>0$. 
From \eqref{25}, we arrive at
\begin{equation*}
\frac{\rho^2(t)}{(1+t)^\mu}\frac{\mathrm{d}}{\mathrm{d}t}\left(\frac{(1+t)^\mu}{\rho^2(t)}V_0(t)\right)=\varepsilon I_\mu[v_0,v_1]+
\int_0^t\int_{\mathbb{R}^n}\vert \partial_su(s,x)\vert^q\Psi_2(s,x)\mathrm{d}x\mathrm{d}s.
\end{equation*}
After some elementary computations and an integrating over $[0,t]$, we find
\begin{align}
V_0(t)&=\frac{V_0(0)}{\rho^2(0)}\frac{\rho^2(t)}{(1+t)^\mu}+\varepsilon I_\mu[v_0,v_1]\frac{\rho^2(t)}{(1+t)^\mu}\int_0^t\frac{(1+s)^\mu}{\rho^2(s)}\mathrm{d}s \notag \\
& \qquad +\frac{\rho^2(t)}{(1+t)^\mu}\int_0^t\frac{(1+s)^\mu}{\rho^2(s)}\int_0^s\int_{\mathbb{R}^n}\vert\partial_\tau
u(\tau,x)\vert^q\Psi_2(\tau,x)\mathrm{d}x\mathrm{d}\tau \mathrm{d}s. \label{26}
\end{align} 
By the continuity and positivity of the function $\frac{\rho^2(t)}{(1+t)^\mu}$ on $[0,2T_1]$ (recall that $\mathrm{K}_\ell(1+t)$ is decreasing with respect to $t$ and tends to $0$ as $t\to +\infty$), from \eqref{26} it follows that
\begin{equation}\label{27}
V_0(t)\geq\frac{V_0(0)}{\rho^2(0)}\frac{\rho^2(t)}{(1+t)^\mu}\gtrsim\varepsilon \int_{\mathbb{R}^n}v_0(x)\varphi(x)\mathrm{d}x
\end{equation} for any $t\in[0,2T_1]$, where the nonnegativity and the nontriviality of $v_0$ guarantee that the integral on the right-hand side of \eqref{27} is positive.

On the other hand, for any $t\in[2T_1,T)$,  by \eqref{15} it follows that
\begin{equation}\label{28}
\begin{aligned}
V_0(t)&\geq\varepsilon I_\mu[v_0,v_1]\frac{\rho^2(t)}{(1+t)^\mu}\int_0^t\frac{(1+s)^\mu}{\rho^2(s)}\mathrm{d}s\\
&\geq\varepsilon I_\mu[v_0,v_1]\frac{\rho^2(t)}{(1+t)^\mu}\int_{\frac{t}{2}}^t\frac{(1+s)^\mu}{\rho^2(s)}\mathrm{d}s\\
&\gtrsim\varepsilon I_\mu[v_0,v_1]\, \mathrm{e}^{-2t}\int_{\frac{t}{2}}^t\mathrm{e}^{2s}\mathrm{d}s\gtrsim \varepsilon I_\mu[v_0,v_1].
\end{aligned}
\end{equation}
Combining \eqref{27} and \eqref{28}, we conclude the validity of \eqref{23}.
\end{proof}

\begin{proposition}\label{propo4}
Let $v_0\in W^{1,1}_{\mathrm{loc}}(\mathbb{R}^n), v_1\in L^1_{\mathrm{loc}}(\mathbb{R}^n)$ be nonnegative and compactly supported functions such that $v_0$ is nontrivial. Then, there exists $T_3>0$ and $K_1=K_1(n,\mu,v_0,v_1)>0$ such that for any $ t\in [T_3,T)$
\begin{equation}\label{29}
V_1(t)\geq K_1\varepsilon.
\end{equation}
\end{proposition}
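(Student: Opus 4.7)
The plan is to derive an explicit formula for $V_1(t)$ by combining the identity \eqref{25} with the relation $V_0'(t)=V_1(t)+\frac{\rho'(t)}{\rho(t)}V_0(t)$ (obtained by differentiating under the integral sign in the definition of $V_0$), then to substitute the representation \eqref{26} for $V_0(t)$ and use the asymptotic behavior of $\rho$ to control the negative contribution. More precisely, inserting the relation above into \eqref{25} gives
\begin{equation*}
V_1(t) = \varepsilon I_\mu[v_0,v_1] + N(t) - c(t)\,V_0(t),
\end{equation*}
where I set $c(t)\doteq\frac{\mu}{1+t}-\frac{\rho'(t)}{\rho(t)}$ and $N(t)\doteq\int_0^t\int_{\mathbb{R}^n}|\partial_s u(s,x)|^q\Psi_2(s,x)\,\mathrm{d}x\,\mathrm{d}s$. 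By Lemma \ref{lem1} and the assumptions on $v_0,v_1$ one has $c(t)>0$ and $I_\mu[v_0,v_1]>0$, while $N(t)\geq 0$; the strategy is then to show that $-c(t)V_0(t)$ cannot absorb $\varepsilon I_\mu[v_0,v_1]$ for $t$ sufficiently large.

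I would next substitute formula \eqref{26} for $V_0(t)$ and introduce the auxiliary function
\begin{equation*}
J(t)\doteq \frac{\rho^2(t)}{(1+t)^\mu}\int_0^t\frac{(1+s)^\mu}{\rho^2(s)}\,\mathrm{d}s.
\end{equation*}
Since $s\mapsto \int_0^s\int_{\mathbb{R}^n}|\partial_\tau u(\tau,x)|^q\Psi_2(\tau,x)\,\mathrm{d}x\,\mathrm{d}\tau$ is nondecreasing and bounded above by $N(t)$ on $[0,t]$, the nonlinear contribution in \eqref{26} is bounded by $J(t)N(t)$, hence
\begin{equation*}
V_1(t)\geq \bigl(1-c(t)J(t)\bigr)\bigl(\varepsilon I_\mu[v_0,v_1]+N(t)\bigr)-c(t)\,\frac{V_0(0)}{\rho^2(0)}\,\frac{\rho^2(t)}{(1+t)^\mu}.
\end{equation*}

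The crux is then to show $\lim_{t\to\infty} c(t)J(t)=\tfrac{1}{2}$ via Bessel-function asymptotics. From \eqref{rho'/rho} and \eqref{prop1} one gets $c(t)\to 1$; similarly \eqref{prop1} yields $\frac{(1+s)^\mu}{\rho^2(s)}=\frac{1}{(1+s)\mathrm{K}_\ell^2(1+s)}\sim \frac{2\mathrm{e}^2}{\pi}\mathrm{e}^{2s}$ and $\frac{\rho^2(t)}{(1+t)^\mu}=(1+t)\mathrm{K}_\ell^2(1+t)\sim \frac{\pi}{2\mathrm{e}^2}\mathrm{e}^{-2t}$, so $J(t)\to \tfrac{1}{2}$ and consequently $c(t)J(t)\to\tfrac{1}{2}$. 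One can therefore choose $T_3$ so large that $1-c(t)J(t)\geq \tfrac{1}{4}$ and, using the exponential decay of $\rho^2(t)/(1+t)^\mu$ from \eqref{15} together with $V_0(0)=\varepsilon\rho(0)\int v_0\varphi\,\mathrm{d}x$, the last term is bounded by $\tfrac{\varepsilon}{8}I_\mu[v_0,v_1]$ for every $t\in[T_3,T)$. Combined with $N(t)\geq 0$, this yields $V_1(t)\geq K_1\varepsilon$ for a suitable $K_1=K_1(n,\mu,v_0,v_1)>0$. The main technical obstacle is precisely verifying $c(t)J(t)\to\tfrac{1}{2}$: without the strict inequality $\tfrac{1}{2}<1$ in the limit, the positive term $\varepsilon I_\mu[v_0,v_1]$ could be entirely cancelled by $c(t)V_0(t)$.
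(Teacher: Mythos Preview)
Your argument is correct and takes a genuinely different, more direct route than the paper's proof. The paper differentiates \eqref{31} to obtain a first-order ODE for $V_1$ (equation \eqref{32}), then introduces a parameter $\omega\in(\tfrac12,1)$ and auxiliary functions $g_1,g_2$ to recast it as the differential inequality \eqref{V'1+ omega ... V1}--\eqref{36}; before integrating, it must separately establish $V_1(t)\geq 0$ on $[0,T)$ via the auxiliary functional $G(t)=\int_{\mathbb{R}^n} v(t,x)\varphi(x)\,\mathrm{d}x$. You instead stay at the algebraic level of \eqref{31}, plug in the explicit representation \eqref{26} for $V_0$, and reduce everything to the single asymptotic $c(t)J(t)\to\tfrac12$. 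This limit is exactly $\lim_{t\to\infty}\big(\tfrac{\mu}{1+t}-\tfrac{2\rho'(t)}{\rho(t)}\big)^{-1}$ (via L'H\^opital applied to $J$), so it is the same asymptotic information encoded in Lemma~\ref{le33}. Your approach is cleaner for this proposition and entirely avoids the auxiliary nonnegativity step; on the other hand, the paper's more elaborate differential-inequality machinery \eqref{V'1+ omega ... V1}--\eqref{36} is not wasted, since it is recycled verbatim in the proof of the iteration frame (Proposition~\ref{case1pro}) to derive \eqref{if2}.
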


\begin{proof}
By the definitions of $V_0(t),V_1(t)$, we have
\begin{equation}\label{30}
V_0^\prime(t)=V_1(t)+\frac{\rho^\prime(t)}{\rho(t)}V_0(t).
\end{equation}
Hence, \eqref{25} can be rewritten as
\begin{equation}\label{31}
V_1(t)+\left(\frac{\mu}{1+t}-\frac{\rho^\prime(t)}{\rho(t)}\right)V_0(t)=\varepsilon I_\mu[v_0,v_1]+
\int_0^t\int_{\mathbb{R}^n}\vert \partial_su(s,x)\vert^q\Psi_2(s,x)\mathrm{d}x\mathrm{d}s.
\end{equation}
Differentiating \eqref{31} with respect to $t$ we get
\begin{align*}
& V_1^\prime(t)+\left(\frac{\mu}{1+t}-\frac{\rho^\prime(t)}{\rho(t)}\right)V_0^\prime(t)+
\left(\left(\frac{\rho^\prime(t)}{\rho(t)}\right)^2-\frac{\mu}{(1+t)^2}-\frac{\rho^{\prime\prime}(t)}{\rho(t)}
\right)V_0(t) =\int_{\mathbb{R}^n}\vert\partial_tu(t,x)\vert^q\Psi_2(t,x)\mathrm{d}x.
\end{align*}
Since $\rho$ satisfies \eqref{rho equation}, we conclude that
\begin{align}
\int_{\mathbb{R}^n}\vert\partial_tu(t,x)\vert^q\Psi_2(t,x)\mathrm{d}x & = 
V_1^\prime(t)+\left(\frac{\mu}{1+t}-\frac{\rho^\prime(t)}{\rho(t)}\right)V_0^\prime(t)+
\left(\left(\frac{\rho^\prime(t)}{\rho(t)}\right)^2-1-\frac{\mu}{1+t}\frac{\rho'(t)}{\rho(t)}
\right)V_0(t) \notag
\\ &= V_1^\prime(t)+\left(\frac{\mu}{1+t}-\frac{\rho^\prime(t)}{\rho(t)}\right)\left(V_0^\prime(t)- \frac{\rho^\prime(t)}{\rho(t)}V_0(t)\right)-V_0(t) \notag \\
& = V_1^\prime(t)+\left(\frac{\mu}{1+t}-\frac{\rho^\prime(t)}{\rho(t)}\right)V_1(t)-V_0(t), \label{32}
\end{align} 
where in the third equality we used \eqref{30}.

Next, we rewrite the right-hand side of \eqref{32}. Fix $\omega\in(\frac{1}{2},1)$, then,
\begin{align}
&V_1^\prime(t)+\left(\frac{\mu}{1+t}-\frac{\rho^\prime(t)}{\rho(t)}\right)V_1(t)-V_0(t) \notag \\ &\quad =V_1^\prime(t)+\omega
\left(\frac{\mu}{1+t}-\frac{2\rho^\prime(t)}{\rho(t)}\right)V_1(t) -g_1(t)\left(V_1(t)+\left(\frac{\mu}{1+t}-\frac{\rho^\prime(t)}{\rho(t)}\right)V_0(t)\right)-g_2(t)V_0(t), \label{33}
\end{align}
where
\begin{align*}
g_1(t)& \doteq (\omega-1)\frac{\mu}{1+t}+(1-2\omega)\frac{\rho^\prime(t)}{\rho(t)},\\
g_2(t)& \doteq 1-\left((\omega-1)
\frac{\mu}{1+t}+(1-2\omega)\frac{\rho^\prime(t)}{\rho(t)}\right)\left(\frac{\mu}{1+t} -\frac{\rho^\prime(t)}{\rho(t)}\right).
\end{align*}
By \eqref{rho'/rho} and \eqref{prop1}, we find that
\begin{equation*}
\frac{\rho^\prime(t)}{\rho(t)}=\frac{\mu+1+\vert \mu-1\vert}{2(1+t)}-\frac{1+O\big((1+t)^{-1}\big)}{1+O\big((1+t)^{-1}\big)}\longrightarrow-1
\end{equation*} 
as $t\rightarrow +\infty$. Therefore,
\begin{equation*}
\lim_{t\to +\infty}g_1(t)=2\omega-1,\ \lim_{t\to +\infty}g_2(t)=2-2\omega.
\end{equation*}
Consequently, there exists $T_4>0$ such that
\begin{equation}\label{39}
g_1(t)>\omega-\tfrac{1}{2},\quad \ g_2(t)>1-\omega,
\end{equation} for any $t\geq T_4$.
By \eqref{32} and \eqref{33}, we have
\begin{align}
&V_1^\prime(t)+\omega
\left(\frac{\mu}{1+t}-\frac{2\rho^\prime(t)}{\rho(t)}\right)V_1(t) \notag \\
& \quad =g_2(t)V_0(t) +g_1(t)\left(V_1(t)+\left(\frac{\mu}{1+t}-\frac{\rho^\prime(t)}{\rho(t)}\right)V_0(t)\right)
+\int_{\mathbb{R}^n}\vert\partial_tu(t,x)\vert^q\Psi_2(t,x)\mathrm{d}x. \label{V'1+ omega ... V1}
\end{align}
Then, combining by \eqref{23},  \eqref{31}, \eqref{39} and \eqref{V'1+ omega ... V1},  for any $t\in[T_4,T)$ it results
\begin{align}
&V_1^\prime(t)+\omega
\left(\frac{\mu}{1+t}-\frac{2\rho^\prime(t)}{\rho(t)}\right)V_1(t) \notag \\
& \quad \geq (1-\omega)K_0\, \varepsilon+  (\omega-\tfrac{1}{2})\left(\varepsilon I_\mu[v_0,v_1]+
\int_0^t\int_{\mathbb{R}^n}\vert \partial_su(s,x)\vert^q\Psi_2(s,x)\mathrm{d}x\mathrm{d}s\right) +\int_{\mathbb{R}^n}\vert\partial_tu(t,x)\vert^q\Psi_2(t,x)\mathrm{d}x \notag \\
& \quad \geq  \left((1-\omega)K_0+(\omega-\tfrac{1}{2})I_\mu[v_0,v_1]\right)\varepsilon,  \label{36}
\end{align}
 where in the last step we used the nonneativity of the nonlinear term and of $\Psi_2$.
 
 Hence, from \eqref{36}, for $t\in[T_4,T)$ we get
\begin{equation*}
\frac{\rho(t)^{2\omega}}{(1+t)^{\mu\omega}}\frac{\mathrm{d}}{\mathrm{d}t}\left(\frac{(1+t)^{\mu\omega}}
{\rho(t)^{2\omega}}V_1(t)\right)\gtrsim\varepsilon.
\end{equation*}
Integrating the last inequality over $[T_5,t]$, where $T_5\doteq \max\{T_1,T_4\}$, we arrive at
\begin{equation}\label{38}
V_1(t)\gtrsim\frac{(1+T_5)^{\mu\omega}}{\rho(T_5)^{2\omega}}\frac{\rho(t)^{2\omega}}{(1+t)^{\mu\omega}}
V_1(T_5)+\varepsilon\frac{\rho(t)^{2\omega}}{(1+t)^{\mu\omega}}\int_{T_5}^t\frac{(1+s)^{\mu\omega}}
{\rho(s)^{2\omega}}\mathrm{d}s.
\end{equation}
In order to derive a lower bound estimate for $V_1$ from \eqref{38}, we first show that $V_1(t)$ is nonnegative for any $t\in[0,T)$. To achieve this goal, we introduce an auxiliary functional $G(t)\doteq \int_{\mathbb{R}^n}v(t,x)\varphi(x)\mathrm{d}x$ for any $t\in [0,T)$. Clearly, $V_1(t)=\rho(t)G^\prime(t)$, so if we prove that $G^\prime(t)\geq0$ for any $t\in[0,T)$, then, it follows the nonnegativity of $V_1$ on $[0,T)$.

 Taking $\varphi(x)$ as the test function in \eqref{inteeq22}, we get
\begin{align*}
&G^\prime(t)+\frac{\mu}{1+t}G(t)-\int_0^tG(s)\mathrm{d}s+\int_0^t\frac{\mu}{(1+s)^2}G(s)\mathrm{d}s\\
& \quad =\varepsilon\int_{\mathbb{R}^n}\big(v_1(x)+\mu v_0(x)\big)\mathrm{d}x+\int_0^t\int_{\mathbb{R}^n}\vert\partial_su(s,x)\vert^q\varphi(x)\mathrm{d}x\mathrm{d}s.
\end{align*}
Differentiating the last equation with respect to $t$, we obtain
\begin{equation}\label{382}
G^{\prime\prime}(t)+\frac{\mu}{1+t}G^\prime(t)=G(t)+\int_{\mathbb{R}^n}\vert\partial_tu(t,x)
\vert^q\varphi(x)\mathrm{d}x.
\end{equation}
Since $V_0(t)=\rho(t)G(t)$, by Proposition \ref{propo3} it follows that $G(t)>0$ for any $t\in[0,T)$.
Therefore, from \eqref{382} it follows that
\begin{equation*}
(1+t)^{-\mu}\frac{\mathrm{d}}{\mathrm{d}t}\Big((1+t)^\mu G^\prime(t)\Big)\geq\int_{\mathbb{R}^n}\vert\partial_tu(t,x)\vert^q\varphi(x)\mathrm{d}x 
\end{equation*} for any $t\in[0,T)$.
By straightforward computations, we have
\begin{align*}
G^\prime(t)&\geq (1+t)^{-\mu}G^\prime(0)+(1+t)^{-\mu}\int_0^t(1+s)^\mu\int_{\mathbb{R}^n}\vert\partial_su(s,x)\vert^q
\varphi(x)\mathrm{d}x\mathrm{d}s\\
&\geq(1+t)^{-\mu}G^\prime(0)\\
& =\varepsilon(1+t)^{-\mu}\int_{\mathbb{R}^n}v_1(x)\varphi(x)\mathrm{d}x\geq0
\end{align*} for any $t\in[0, T)$. Thus, $V_1(t)\geq0$ for any $t\in[0,T)$.

Finally, by \eqref{38} and \eqref{15},  for any $t\in[2T_5,T)$ we find
\begin{align*}
V_1(t)&\gtrsim\varepsilon\frac{\rho(t)^{2\omega}}{(1+t)^{\mu\omega}}\int_{T_5}^t\frac{(1+s)^{\mu\omega}}
{\rho(s)^{2\omega}}\mathrm{d}s\\
&\geq\frac{\varepsilon}{C_1^{2\omega}} \mathrm{e}^{-2\omega t}\int_{\frac{t}{2}}^t\mathrm{e}^{2\omega s}\mathrm{d}s=\frac{\varepsilon}{2\omega C_1^{2\omega}}(1-\mathrm{e}^{-\omega t}) \geq \frac{1-\mathrm{e}^{-2\omega T_5}}{2\omega C_1^{2\omega}}\, \varepsilon.
\end{align*}
Setting $T_3\doteq 2T_5$ and taking $0<K_1\lesssim\frac{1-\mathrm{e}^{-2\omega T_5}}{2\omega C_1^{2\omega}}$, 
we completed the proof of \eqref{29}.
\end{proof}

Next, we determine the iterative frame for $(U_1,V_1)$.
\begin{proposition}\label{case1pro}
Let $u_0,v_0\in W^{1,1}_{\mathrm{loc}}(\mathbb{R}^n), u_1,v_1\in L^{1}_{\mathrm{loc}}(\mathbb{R}^n)$ be nonnegative and compactly supported functions. Then, there exist $T_6>0$ and $C, K>0$ such that  for any $t\in[T_6,t)$
\begin{align}
&U_1(t)\geq C\int_{T_6}^t\mathrm{e}^{2(s-t)}(1+s)^{-\frac{n-1}{2}(p-1)-\frac{\mu}{2}p}(V_1(s))^p\mathrm{d}s,\label{if1}\\
&V_1(t)\geq K\int_{T_6}^t(1+s)^{-\frac{n-1}{2}(q-1)+\frac{\mu}{2}}(U_1(s))^q\mathrm{d}s.\label{if2}
\end{align} 
\end{proposition}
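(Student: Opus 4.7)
The strategy is to prove \eqref{if1} and \eqref{if2} separately, using in each case a differential identity for the relevant functional combined with a H\"older inequality to invert the relation between the weighted $L^p$-integrals of derivatives and powers of $U_1$ or $V_1$. The starting point for \eqref{if1} is Proposition \ref{proposition2}, while for \eqref{if2} the argument refines the proof of Proposition \ref{propo4} by keeping the nonlinear double-integral term that was discarded there to obtain only the bare bound $V_1(t)\gtrsim \varepsilon$.

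\textbf{For \eqref{if1},} starting from $U_1(t) \geq \int_0^t \mathrm{e}^{2(s-t)} \int_{\mathbb{R}^n} |\partial_s v|^p \Psi_1\,\mathrm{d}x\,\mathrm{d}s$ provided by Proposition \ref{proposition2}, I would apply H\"older with exponents $(p, p/(p-1))$ to the identity $V_1(s) = \rho(s) \int_{B_{R+s}} \partial_s v\, \varphi\,\mathrm{d}x$, splitting $\varphi = \varphi^{1/p}\varphi^{1/p'}$. Lemma \ref{lemma3.3} with $r=1$ gives $\int_{B_{R+s}}\varphi\,\mathrm{d}x \lesssim \mathrm{e}^s (R+s)^{(n-1)/2}$, and \eqref{12} gives $\rho(s)\lesssim (1+s)^{\mu/2}\mathrm{e}^{-s}$. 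Combining these with $\Psi_1 = \mathrm{e}^{-s}\varphi$ to cancel the exponential factors leads to $V_1(s)^p \lesssim (1+s)^{\mu p/2 + (n-1)(p-1)/2} \int |\partial_s v|^p \Psi_1\,\mathrm{d}x$ for $s$ at least a suitably large $T_6$ ensuring $R+s\sim 1+s$. Inverting this bound and substituting into the lower bound for $U_1(t)$ gives \eqref{if1}.

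\textbf{For \eqref{if2},} I would revisit \eqref{36} and retain only the nonnegative double-integral term on its right-hand side, obtaining the differential inequality $V_1'(t) + \omega a(t) V_1(t) \geq (\omega - \tfrac{1}{2}) \int_0^t h(\sigma)\,\mathrm{d}\sigma$, where $a(t) = \tfrac{\mu}{1+t} - \tfrac{2\rho'(t)}{\rho(t)}$ and $h(\sigma) = \int_{\mathbb{R}^n} |\partial_\sigma u|^q \Psi_2\,\mathrm{d}x$. Multiplying by the integrating factor $F(t) = (1+t)^{\mu\omega}/\rho(t)^{2\omega}$, integrating on $[T_5,t]$, dropping the nonnegative boundary contribution, and interchanging the order of integration yields $V_1(t) \gtrsim F(t)^{-1} \int_{T_5}^t h(\sigma) \int_\sigma^t F(s)\,\mathrm{d}s\,\mathrm{d}\sigma$. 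The hard part is to extract a lower bound with no residual exponential weight in $t-\sigma$: exploiting the asymptotic $F(s)\sim \mathrm{e}^{2\omega s}$ from \eqref{15}, the inner integral $\int_\sigma^t F(s)\,\mathrm{d}s$ is bounded from below by a positive constant multiple of $F(t)$ as soon as $t-\sigma$ exceeds a fixed threshold, so the factor $F(t)^{-1}$ cancels, and, after absorbing the negligible tail coming from $\sigma$ near $t$, one obtains $V_1(t) \gtrsim \int_{T_6}^t h(\sigma)\,\mathrm{d}\sigma$. Finally, applying H\"older with exponents $(q, q/(q-1))$ to $U_1(\sigma) = \tfrac{\mathrm{e}^{-\sigma}}{\rho(\sigma)} \int \partial_\sigma u\, \Psi_2\,\mathrm{d}x$ and invoking once more Lemma \ref{lemma3.3} together with the asymptotics of $\rho$, one deduces $h(\sigma) \gtrsim (1+\sigma)^{\mu/2-(n-1)(q-1)/2} U_1(\sigma)^q$, and substitution yields \eqref{if2}.
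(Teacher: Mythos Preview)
Your argument for \eqref{if1} is correct and matches the paper's: both start from Proposition \ref{proposition2}, apply H\"older to bound $V_1(s)^p$ by a weight times $\int|\partial_s v|^p\Psi_1$, and estimate that weight via Lemma \ref{lemma3.3} and the asymptotics \eqref{12} of $\rho$. The paper organizes the H\"older step as $V_1\leq(\int|\partial_t v|^p\Psi_1)^{1/p}(\int\Psi_2^{p'}\Psi_1^{-p'/p})^{1/p'}$, but this is equivalent to your splitting of $\varphi$.

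For \eqref{if2} there is a genuine gap. With your notation, after multiplying by the integrating factor and swapping the order of integration you obtain
\[
V_1(t)\gtrsim F(t)^{-1}\int_{T_5}^t h(\sigma)\Big(\int_\sigma^t F(s)\,\mathrm{d}s\Big)\mathrm{d}\sigma,
\]
and since $F(s)\sim\mathrm{e}^{2\omega s}$ by \eqref{15}, the kernel $F(t)^{-1}\int_\sigma^t F(s)\,\mathrm{d}s\sim\tfrac{1}{2\omega}\bigl(1-\mathrm{e}^{-2\omega(t-\sigma)}\bigr)$ vanishes as $\sigma\to t$. So this yields only $V_1(t)\gtrsim\int_{T_5}^{t-c}h(\sigma)\,\mathrm{d}\sigma$ for some fixed $c>0$; the ``tail'' $\int_{t-c}^t h$ cannot be absorbed, because nothing prevents it from dominating the full integral. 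Hence \eqref{if2} with upper limit $t$ does not follow. The paper avoids this by a comparison argument: it introduces the auxiliary functional $V_1(t)-C_2\int_{T_8}^t h-C_3\varepsilon$ with $C_2,C_3$ small, uses Lemma \ref{le33} (that is, $a(t)\leq 3$) together with the full right-hand side of \eqref{V'1+ omega ... V1}--\eqref{36} to show this functional satisfies a homogeneous first-order differential inequality, and checks via Proposition \ref{propo4} that its value at $T_8$ is positive; nonnegativity then gives $V_1(t)\geq C_2\int_{T_8}^t h$ with no loss near $\sigma=t$. Your route can be repaired by also keeping the single term $h(t)$ from the right-hand side of \eqref{36}: then after integration the $\sigma$-integrand carries the bracket $(\omega-\tfrac12)\int_\sigma^t F(s)\,\mathrm{d}s+F(\sigma)$, which divided by $F(t)$ is uniformly bounded below by a positive constant for all $\sigma\in[T_5,t]$.
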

\begin{proof}
We are going to derive the iteration frame by using Proposition  \ref{proposition2} and \eqref{V'1+ omega ... V1}-\eqref{36}. Applying  H\"older's inequality, we have
\begin{align}\label{holder1}
&V_1(t) \leq \left(\int_{\mathbb{R}^n}\vert\partial_tv(t,x)\vert^p\Psi_1(t,x)\mathrm{d}x\right)^{\frac{1}{p}}
\left(\int_{B_{R+t}}\frac{(\Psi_2(t,x))^{p^\prime}}{(\Psi_1(t,x))^{\frac{p^\prime}{p}}}\, \mathrm{d}x\right)^{\frac{1}{p^\prime}},
\end{align}
where $\frac{1}{p}+\frac{1}{p^\prime}=1$. Let us determine an upper bound  estimate for the integral over $R_{R+t}$ on the right-hand side of \eqref{holder1}. By \eqref{12}, we get
\begin{align*}
&\frac{(\Psi_2(t,x))^{p^\prime}}{(\Psi_1(t,x))^{\frac{p^\prime}{p}}}=\rho(t)^{p^\prime}\mathrm{e}^{\frac{p^\prime}{p}t}\varphi(x)
=\left(\tfrac{\pi}{2\mathrm{e}^2}\right)^{\frac{p^\prime}{2}}\mathrm{e}^{-t}(1+t)^{\frac{\mu}{2}p^\prime}\varphi(x)\left(1+O\left((1+t)^{-1}\right)\right)\ \quad \text{as}\ t\to+\infty.
\end{align*}
In particular, from this asymptotic behavior, it follows that there exists $T_7>0$ such that for any $t\in[T_7,T)$ the following estimate holds: $$(\Psi_2(t,x))^{p^\prime}(\Psi_1(t,x))^{-\frac{p^\prime}{p}}\lesssim(1+t)^{\frac{\mu}{2}p^\prime}\mathrm{e}^{-t}\varphi(x).$$
Thus, by Lemma \ref{lemma3.3}, for any $t\in[T_7,T)$ we have
\begin{align*}
\int_{B_{R+t}}\frac{(\Psi_2(t,x))^{p^\prime}}{(\Psi_1(t,x))^{\frac{p^\prime}{p}}}\,\mathrm{d}x & \lesssim(1+t)^{\frac{\mu}{2}p^\prime}\mathrm{e}^{-t}
\int_{B_{R+t}}\varphi(x)\mathrm{d}x\\ & \lesssim(1+t)^{\frac{\mu}{2}p^\prime+\frac{n-1}{2}}.
\end{align*}
Consequently, employing Proposition  \ref{proposition2} and  \eqref{holder1}, for any $t\in[T_7,T)$ we find
\begin{align}
U_1(t)&\geq\int_{T_7}^t\mathrm{e}^{2(s-t)}\int_{\mathbb{R}^n}\vert\partial_sv(s,x)\vert^p\Psi_1(s,x)\mathrm{d}x\mathrm{d}s \notag\\
&\gtrsim\int_{T_7}^t\mathrm{e}^{2(s-t)}(1+s)^{-\frac{\mu}{2}p-\frac{n-1}{2}(p-1)}(V_1(s))^p\mathrm{d}s. \label{it1}
\end{align}

In order to derive the estimate in \eqref{if2}, we introduce the auxiliary functional
\begin{equation*}
F(t):=V_1(t)-C_2\int_{T_8}^t\int_{\mathbb{R}^n}\vert\partial_su(s,x)\vert^q\Psi_2(s,x)\mathrm{d}x\mathrm{d}s-C_3\varepsilon
\end{equation*}
with $C_2,C_3>0$ positive constants, whose ranges will be fixed afterwards, and $T_8\doteq \max\{T_2,T_3,T_4\}$. 
Since
\begin{align*}
& F^\prime(t)+\omega\left(\frac{\mu}{1+t}-\frac{2\rho^\prime(t)}{\rho(t)}\right)F(t)\\ & \quad =V_1^\prime(t) +\omega\left(\frac{\mu}{1+t}-\frac{2\rho^\prime(t)}{\rho(t)}\right)V_1(t) -C_2\int_{\mathbb{R}^n}\vert\partial_tu(t,x)\vert^q\Psi_2(t,x)\mathrm{d}x\\ & \qquad -\omega\left(\frac{\mu}{1+t}
-\frac{2\rho^\prime(t)}{\rho(t)}\right)\left(C_ 2\int_{T_8}^t\int_{\mathbb{R}^n} \vert\partial_su(s,x)\vert^q\Psi_2(s,x)\mathrm{d}x\mathrm{d}s +C_3\varepsilon \right),
\end{align*}
by \eqref{36} and Lemma \ref{le33}, for $t\in[T_8,T)$ we have
\begin{align*}
& F^\prime(t)+\omega\left(\frac{\mu}{1+t}-\frac{2\rho^\prime(t)}{\rho(t)}\right)F(t)\\ 
& \quad \geq  (1-\omega)K_0\varepsilon + \left(\omega-\tfrac{1}{2} \right)\left(\varepsilon I_\mu[v_0,v_1]+
\int_0^t\int_{\mathbb{R}^n}\vert \partial_su(s,x)\vert^q\Psi_2(s,x)\mathrm{d}x\mathrm{d}s\right) \\
&\qquad  +(1-C_2)\int_{\mathbb{R}^n}\vert\partial_tu(t,x)\vert^q\Psi_2(t,x)\mathrm{d}x \\
&\qquad   -\omega\left(\frac{\mu}{1+t}
-\frac{2\rho^\prime(t)}{\rho(t)}\right)\left(C_ 2\int_{T_8}^t\int_{\mathbb{R}^n} \vert\partial_su(s,x)\vert^q\Psi_2(s,x)\mathrm{d}x\mathrm{d}s +C_3\varepsilon \right)\\
&\quad \geq\Big((\omega-\tfrac{1}{2})I_\mu[v_0,v_1]+(1-\omega)K_0-3\omega C_3\Big)\varepsilon+
(1-C_2)\int_{\mathbb{R}^n}\vert\partial_tu(t,x)\vert^q\Psi_2(t,x)\mathrm{d}x\\
&\qquad+\big((\omega-\tfrac{1}{2})-3C_2\omega\big)\int_{T_8}^t\int_{\mathbb{R}^n}
\vert\partial_su(s,x)\vert^q\Psi_2(s,x)\mathrm{d}x\mathrm{d}s.
\end{align*}
Let us fix $C_2,C_3$ such that
\begin{align*}
0<C_2<\min\left\{\tfrac{1}{3\omega}\left(\omega-\tfrac{1}{2}\right),1\right\}, \ 0<C_3<\tfrac{1}{3\omega}\left(\left(\omega-\tfrac{1}{2}\right)I_\mu[v_0,v_1]+(1-\omega)K_0\right).
\end{align*}
Thanks to these choices for $C_2,C_3$, by the nonnegativity of the nonlinear term and $\Psi_2$, for any $t\in[T_8,T)$ it follows that 
\begin{align*}
&\frac{\rho(t)^{2\omega}}{(1+t)^{\mu\omega}}\frac{\mathrm{d}}{\mathrm{d}t}\left(\frac{(1+t)^{\mu\omega}}
{\rho(t)^{2\omega}}F(t)\right) = F^\prime(t)+\omega\left(\frac{\mu}{1+t}-\frac{2\rho^\prime(t)}{\rho(t)}\right)F(t)\geq 0.
\end{align*}
Thus, for any $t\in[T_8,T)$ it holds
\begin{align}\label{40}
F(t)\geq\frac{(1+T_8)^{\mu\omega}}{(1+t)^{\mu\omega}}\frac{\rho(t)^{2\omega}}{\rho(T_8)^{2\omega}}F(T_8).
\end{align}
If $C_3<K_1$, then, by Proposition \ref{propo4}, we have $F(T_8)=V_1(T_8)-C_3\varepsilon\geq (K_1-C_3)\varepsilon>0$. Then, by \eqref{40} it follows that $F$ is nonnegative in $[T_8,T)$. Hence, for any $t\in[T_8,T)$
\begin{align} \label{40.1}
 V_1(t)\geq C_2\int_{T_8}^t\int_{\mathbb{R}^n}\vert\partial_su(s,x)\vert^q\Psi_2(s,x)\mathrm{d}x\mathrm{d}s.
\end{align}
Next, we apply H\"older's inequality and \eqref{support condition sol}
\begin{align} \label{41}
U_1(t)=\int_{\mathbb{R}^n}\partial_tu(t,x)\Psi_1(t,x)\mathrm{d}x 
\leq\left(\int_{\mathbb{R}^n}\vert\partial_tu(t,x)\vert^q\Psi_2(t,x)\mathrm{d}x\right)^{\frac{1}{q}}
\left(\int_{B_{R+t}}\frac{(\Psi_1(t,x))^{q^\prime}}{(\Psi_2(t,x))^{\frac{q^\prime}{q}}}\mathrm{d}x\right)^{\frac{1}{q^\prime}}.
\end{align}
By \eqref{12}, there exists $T_9>0$ such that
\begin{align*}
\frac{(\Psi_1(t,x))^{q^\prime}}{(\Psi_2(t,x))^{\frac{q^\prime}{q}}} &=\rho(t)^{-\frac{q^\prime}{q}}\mathrm{e}^{-q^\prime t}\varphi(x) 
\lesssim (1+t)^{-\frac{\mu q^\prime}{2q}}\mathrm{e}^{-t}\varphi(x).
\end{align*}
 for any $t\in[T_9,T)$. Hence, by Lemma \ref{lemma3.3}, we have
\begin{align}
\int_{\mathbb{R}^n}\frac{(\Psi_1(t,x))^{q^\prime}}{(\Psi_2(t,x))^{\frac{q^\prime}{q}}}\mathrm{d}x\lesssim (1+t)^{-\frac{\mu q^\prime}{2q}+\frac{n-1}{2}} \label{42}
\end{align} for any $t\in [T_9,T)$.
Combining \eqref{40.1}, \eqref{41} and \eqref{42}, we obtain 
\begin{equation}\label{it2}
V_1(t)\gtrsim\int_{\max\{T_8,T_9\}}^t(1+s)^{\frac{\mu}{2}-\frac{n-1}{2}(q-1)}(U_1(s))^q\mathrm{d}s 
\end{equation} for any $ t\in [\max\{T_8,T_9\},T)$.
Choosing $T_6\doteq \max\{T_7,T_8,T_9\}$, by \eqref{it1} and \eqref{it2}, we completed the proof.
\end{proof}

\subsection{Scattering producing damping}

In this subsection, we consider the case $b\geq 0$, $b\in L^1([0,\infty))$. 
Let us consider $\varepsilon>0$ and $u_0,u_1,v_0,v_1$ satisfying the assumptions of the statement of Theorem \ref{theorem2}. Let $(u,v)$  be a local solution  to \eqref{eqs} on $[0,T)$. We consider the following functionals
\begin{align*}
& && U_0(t)\doteq \int_{\mathbb{R}^n}u(t,x)\Psi_1(t,x)\mathrm{d}x, && V_0(t)\doteq \int_{\mathbb{R}^n}v(t,x)\Psi_1(t,x)\mathrm{d}x, && \\
& && U_1(t)\doteq \int_{\mathbb{R}^n}\partial_tu(t,x)\Psi_1(t,x)\mathrm{d}x, && V_1(t)\doteq \int_{\mathbb{R}^n}\partial_tv(t,x)\Psi_1(t,x)\mathrm{d}x, && 
\end{align*}
where $\Psi_1(t,x)=\mathrm{e}^{-t}\varphi(x)$ as in Section \ref{seccase1}.
In the following we adapt the main ideas from \cite{LaiTa2019}  to our model.

Since the coefficient $b=b(t)$ does not effect the linear operator on the first equation in \eqref{eqs}, Proposition \ref{nonnegativity} and Proposition \ref{proposition2} remain valid even in this case. 

Now, we focus on establishing lower bound estimates for  $V_0(t)$ and $V_1(t)$. As in \cite{LaiTa2019}, we introduce the function
\begin{equation}\label{defm}
m(t)\doteq \mathrm{e}^{-\int_t^\infty b(s)\mathrm{d}s} \qquad \mbox{for any } t\geq 0.
\end{equation}
Clearly, $m$ is an increasing function satisfying
\begin{align}
& 0< \mathrm{e}^{-\|b\|_{L^1([0,\infty))}}=m(0)\leq m(t)\leq1, \label{prop m boundedness} \\
&  m^\prime(t)=b(t) m(t), \label{prop m derivative}
\end{align} for any $t\geq 0$.

\begin{proposition}\label{9-1}
Let $v_0\in W^{1,1}_{\mathrm{loc}}(\mathbb{R}^n)$, $v_1\in L^{1}_{\mathrm{loc}}(\mathbb{R}^n)$ be nonnegative and compactly supported functions such that $v_1$ is nontrivial. Then, $V_0(t)\geq 0$ for any $t\in [0,T)$.
%\begin{equation}
%V_0(t)\geq\frac{m(0)\varepsilon}{2}\int_{\mathbb{R}^n}v_0(x)\varphi(x)\mathrm{d}x
%\end{equation}
\end{proposition}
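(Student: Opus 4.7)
The plan is to follow the same scheme as Proposition \ref{nonnegativity} — test the weak $v$-equation against $\Psi_1$, derive an ODE, and read off the sign — but since $b$ is merely $L^1$ the explicit factorization of the second-order operator is no longer available and I will substitute a first-crossing argument in its place.

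First I would plug $\psi=\Psi_1(s,x)=\mathrm{e}^{-s}\varphi(x)$ into the identity \eqref{inteeq12}, which is admissible despite $\Psi_1$ being non-compactly supported thanks to \eqref{support condition sol}. Using $\Delta\varphi=\varphi$ (to integrate by parts in $x$), $\partial_s\Psi_1=-\Psi_1$, and the relation $V_1=V_0'+V_0$, I would obtain
\begin{align*}
V_0(t)+V_1(t)+\int_0^t b(s)V_1(s)\,\mathrm{d}s=\varepsilon\int_{\mathbb{R}^n}(v_0+v_1)\varphi\,\mathrm{d}x+\int_0^t\!\int_{\mathbb{R}^n}|\partial_s u|^q\Psi_1\,\mathrm{d}x\mathrm{d}s.
\end{align*}
Differentiating in $t$ and simplifying with $V_0'=V_1-V_0$ gives the first-order ODE
\begin{align*}
V_1'(t)+(1+b(t))V_1(t)=V_0(t)+G(t),\qquad G(t):=\int_{\mathbb{R}^n}|\partial_t u(t,x)|^q\Psi_1(t,x)\,\mathrm{d}x\geq 0.
\end{align*}

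Using the integrating factor $\mathrm{e}^t\,m(t)/m(0)$ (which works thanks to \eqref{prop m derivative}), I would represent
\begin{align*}
V_1(t)=\frac{m(0)\,\mathrm{e}^{-t}}{m(t)}V_1(0)+\int_0^t\frac{m(s)\,\mathrm{e}^{-(t-s)}}{m(t)}\bigl(V_0(s)+G(s)\bigr)\,\mathrm{d}s,
\end{align*}
and I would note that $V_1(0)=\varepsilon\int_{\mathbb{R}^n} v_1\varphi\,\mathrm{d}x>0$ since $v_1\geq 0$ is nontrivial and $\varphi>0$.

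To conclude, I would proceed by contradiction: assume $V_0$ takes a negative value on $[0,T)$ and set $t_*:=\inf\{t\in[0,T):V_0(t)<0\}$. Continuity of $V_0$ (which follows from $v\in\mathcal{C}([0,T);W^{1,1}_{\mathrm{loc}}(\mathbb{R}^n))$) together with $V_0(0)\geq 0$ gives $V_0(t_*)=0$ and $V_0\geq 0$ on $[0,t_*]$. Then the representation above forces $V_1\geq 0$ on $[0,t_*]$, with in fact $V_1(t_*)\geq \tfrac{m(0)\mathrm{e}^{-t_*}}{m(t_*)}V_1(0)>0$. But then $V_0'(t_*)=V_1(t_*)-V_0(t_*)=V_1(t_*)>0$, so $V_0$ is strictly increasing at $t_*$, contradicting the definition of $t_*$.

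The main obstacle is the loss of the explicit factorization used in Proposition \ref{nonnegativity}; this is what forces me to pivot from a direct representation formula for $V_0$ to the first-crossing trick above. The latter crucially needs the strict positivity $V_1(0)>0$, which is exactly the nontriviality of $v_1$ assumed in Theorem \ref{theorem2} (note that, in contrast with Theorem \ref{theorem1}, it is $v_1$ rather than $v_0$ which must be nontrivial).
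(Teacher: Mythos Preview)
Your proof is correct and follows the same overall route as the paper: test \eqref{inteeq12} with $\Psi_1$, obtain the first-order relation $V_1'+(1+b)V_1=V_0+G$ (this is precisely the paper's \eqref{5-1} in functional form), bring in the integrating factor $m(t)$, and conclude nonnegativity via a first-crossing/comparison argument. The only difference is in the bookkeeping of the last step: the paper manipulates further to an integral inequality for $V_0$ itself (equation \eqref{8-1}) and then splits into the cases $v_0$ nontrivial versus $v_0\equiv 0$ to reach a contradiction at a first zero, whereas you represent $V_1$ and use $V_0'(t_*)=V_1(t_*)>0$ at the first zero---a slightly cleaner variant, since the assumed nontriviality of $v_1$ alone guarantees $V_1(0)>0$ and makes the case split unnecessary.
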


\begin{proof}
Since $\partial_t\Psi_1=-\Psi_1$, $\Delta\Psi_1=\Psi_1$, employing $\Psi_1$ as  test function into \eqref{inteeq12}, we get
\begin{align*}
& \int_0^t \int_{\mathbb{R}^n}\left(\partial_s v(s,x)-v(s,x)+b(s)\partial_s v(s,x)\right)\Psi_1(s,x) \mathrm{d}x \mathrm{d}s + \int_{\mathbb{R}^n}\partial_t v(t,x) \Psi_1(t,x) \mathrm{d}x \\ & \quad = \varepsilon \int_{\mathbb{R}^n} v_1(x) \varphi(x) \mathrm{d}x+\int_0^t \int_{\mathbb{R}^n}|\partial_s u(s,x)|^q\Psi_1(s,x)\mathrm{d}x\mathrm{d}s.
\end{align*}
Differentiating the previous equation with respect to $t$, we find
\begin{align}
&\frac{\mathrm{d}}{\mathrm{d}t}\int_{\mathbb{R}^n}\partial_tv(t,x)\Psi_1(t,x)\mathrm{d}x+b(t)\int_{\mathbb{R}^n}\partial_tv(t,x)\Psi_1(t,x)\mathrm{d}x \notag\\
&+\int_{\mathbb{R}^n}\big(\partial_tv(t,x)-v(t,x)\big)\Psi_1(t,x)\mathrm{d}x=\int_{\mathbb{R}^n}\vert\partial_tu(t,x)\vert^q\Psi_1(t,x)\mathrm{d}x. \label{5-1}
\end{align}
Multiplying both sides of  \eqref{5-1} by $m(t)$ and using \eqref{prop m derivative}, we obtain
\begin{align}
&\frac{\mathrm{d}}{\mathrm{d}t}\Big(m(t)\int_{\mathbb{R}^n}\partial_tv(t,x)\Psi_1(t,x)\mathrm{d}x\Big)+
m(t)\int_{\mathbb{R}^n}\big(\partial_tv(t,x)-v(t,x)\big)\Psi_1(t,x)\mathrm{d}x\notag\\
&\quad\quad=m(t)\int_{\mathbb{R}^n}\vert\partial_tu(t,x)\vert^q\Psi_1(t,x)\mathrm{d}x. \label{5-2-0}
\end{align}
Integrating  \eqref{5-2-0} over $[0,t]$, we arrive at
\begin{align} &m(t)\int_{\mathbb{R}^n}\partial_tv(t,x)\Psi_1(t,x)\mathrm{d}x-m(0)\varepsilon\int_{\mathbb{R}^n}v_1(x)\varphi(x)\mathrm{d}x\notag\\
&\qquad+\int_0^tm(s)\int_{\mathbb{R}^n}\big(\partial_sv(s,x)-v(s,x)\big)\Psi_1(s,x)\mathrm{d}x\mathrm{d}s \notag \\
&\ =\int_0^tm(s)\int_{\mathbb{R}^n}\vert\partial_su(s,x)\vert^q\Psi_1(s,x)\mathrm{d}x\mathrm{d}s. \label{5-2}
\end{align}
Since $V_1(t)=V_0^\prime(t)+V_0(t)$, \eqref{5-2} can be rewritten as
\begin{equation}\label{6-2}
\begin{aligned}
&m(t)\big(V_0^\prime(t)+V_0(t)\big)+\int_0^tm(s)V_0^\prime(s)\mathrm{d}s\\
&\ \ =m(0)\varepsilon
\int_{\mathbb{R}^n}v_1(x)\varphi(x)\mathrm{d}x+\int_0^tm(s)\int_{\mathbb{R}^n}\vert\partial_su(s,x)\vert^q\Psi_1(s,x)\mathrm{d}x\mathrm{d}s
\end{aligned}
\end{equation}
Integrating by parts and using \eqref{prop m derivative}, we have
\begin{align}
\int_0^tm(s)V_0^\prime(s)\mathrm{d}s=m(t)V_0(t)-m(0)\varepsilon\int_{\mathbb{R}^n}v_0(x)\varphi(x)\mathrm{d}x-\int_0^t b(s) m(s)V_0(s)\mathrm{d}s. \label{6-2,5}
\end{align}
Then, combining \eqref{6-2} and \eqref{6-2,5},  we obtain
\begin{align}
V_0^\prime(t)+2V_0(t) &=\frac{m(0)}{m(t)}\varepsilon\int_{\mathbb{R}^n}\big(v_0(x)+v_1(x)\big)\varphi(x)\mathrm{d}x +\frac{1}{m(t)}\int_0^t b(s) m(s) V_0(s)\mathrm{d}s \notag \\
&\quad + \frac{1}{m(t)}\int_0^tm(s)\int_{\mathbb{R}^n}\vert\partial_su(s,x)\vert^q\Psi_1(s,x)\mathrm{d}x\mathrm{d}s. \label{6-3}
\end{align}
Due to the nonnegativity of the nonlinear term and of the test function $\Psi_1$, we conclude that
\begin{align*}
&\frac{\mathrm{d}}{\mathrm{d}t}\left(\mathrm{e}^{2t}V_0(t)\right)\geq m(0) J[v_0,v_1]\, \varepsilon \, \frac{\mathrm{e}^{2t}}{m(t)}+\frac{\mathrm{e}^{2t}}{m(t)}\int_0^t b(s)m(s)V_0(s)\mathrm{d}s,
\end{align*} where 
\begin{align} \label{def J[v0,v1]}
J[v_0,v_1]\doteq \int_{\mathbb{R}^n} \left(v_0(x)+v_1(x)\right)\varphi(x)\mathrm{d}x.
\end{align}
Therefore, integrating over $[0,t]$ the last estimate and using  \eqref{prop m boundedness}, we have
\begin{align*}
\mathrm{e}^{2t}V_0(t) &\geq V_0(0)+ m(0) J[v_0,v_1]\, \varepsilon 
\int_0^t\frac{\mathrm{e}^{2s}}{m(s)}\mathrm{d}s +\int_0^t\frac{\mathrm{e}^{2s}}{m(s)}\int_0^s b(\tau) m(\tau)V_0(\tau)\mathrm{d}\tau \mathrm{d}s \\
&\geq V_0(0)+m(0) J[v_0,v_1]\, \varepsilon \int_0^t\mathrm{e}^{2s}\mathrm{d}s +\int_0^t\frac{\mathrm{e}^{2s}}{m(s)}\int_0^s b(\tau) m(\tau)V_0(\tau)\mathrm{d}\tau \mathrm{d}s.
\end{align*}
Hence,
\begin{align}
V_0(t) & \geq \mathrm{e}^{-2t}V_0(0)+\frac{m(0)}{2} J[v_0,v_1]\, \varepsilon (1-\mathrm{e}^{-2t}) +\mathrm{e}^{-2t}\int_0^t\frac{\mathrm{e}^{2s}}{m(s)}\int_0^sm(\tau)b(\tau)V_0(\tau)\mathrm{d}\tau \mathrm{d}s. \label{8-1}
\end{align}
Next, we prove that from \eqref{8-1} it follows that $V_0(t)\geq 0$ for any $t\in[0,T)$. To this purpose, we use a standard comparison argument. We have to consider separately two cases, depending on whether or not $v_0$ is trivial. If $v_0$ is nontrivial, since $v_0$ is assumed nonnegative, we have $V_0(0)>0$. By the continuity of $V_0$ it follows that $V_0(t)>0$ for $t$ in a right neighborhhood of $0$. By contradiction let us assume that $V_0$ is negative at some time. Being $V_0$ continuous, this means that $V_0$ has a zero, let us call $t_1>0$ the smallest zero of $V_0$. Then, $V_0(t)>0$ for any $t\in [0,t_1)$ and $V_0(t_1)=0$. However, this contradicts \eqref{8-1}, since 
\begin{align*}
0=V_0(t_1) & \geq \mathrm{e}^{-2t_1}V_0(0)+\frac{m(0)}{2} J[v_0,v_1]\, \varepsilon (1-\mathrm{e}^{-2t_1}) +\mathrm{e}^{-2t_1}\int_0^{t_1}\frac{\mathrm{e}^{2s}}{m(s)}\int_0^sm(\tau)b(\tau)V_0(\tau)\mathrm{d}\tau \mathrm{d}s \\ & \geq \mathrm{e}^{-2t_1}V_0(0) >0.
\end{align*} If $v_0\equiv 0$, since $v_1$ is nonnegative and nontrivial by assumption, then $V_0(0)=0$ and $V_0'(0)>0$. Thanks to the continuity of $V_0'$, there exists $t_2>0$ such that $V_0'(t)>0$ for any $t\in [0,t_2)$. Therefore, $V_0$ is strictly increasing and, consequently, positive in $(0,t_2)$. Repeating a similar argument as in the first case, we find again a contradiction if we assume that there exists $t_3>t_2$ such that $V_0(t)>0$ for any $t\in (0,t_3)$ and $V_0(t_3)=0$. Indeed, by \eqref{8-1} it follows that
\begin{align*}
0=V_0(t_3) & \geq \frac{m(0)}{2} J[0,v_1]\, \varepsilon (1-\mathrm{e}^{-2t_3}) +\mathrm{e}^{-2t_3}\int_0^{t_3}\frac{\mathrm{e}^{2s}}{m(s)}\int_0^sm(\tau)b(\tau)V_0(\tau)\mathrm{d}\tau \mathrm{d}s \\ & \geq \frac{m(0)}{2}\,  \varepsilon \, \left(1-\mathrm{e}^{-2t_3}\right) \int_{\mathbb{R}^n} v_1(x) \varphi(x) \mathrm{d}x>0.
\end{align*} So, we proved that $V_0$ is nonnegative in both cases. 
\end{proof}

\begin{proposition}\label{V1escase2}
Let $v_0\in W^{1,1}_{\mathrm{loc}}(\mathbb{R}^n)$, $v_1\in L^{1}_{\mathrm{loc}}(\mathbb{R}^n)$ be nonnegative and compactly supported functions such that $v_1$ is nontrivial. Then, for any $t\in [0,T)$
\begin{equation}\label{9-2}
V_1(t)\geq K_2\, \varepsilon+K_3\int_0^t\int_{\mathbb{R}^n}\vert\partial_su(s,x)\vert^q\Psi_1(s,x)\mathrm{d}x\mathrm{d}s,
\end{equation}
where $K_2=K_2(n,b,v_1),K_3=K_3(b)>0$ are constants independent of $\varepsilon$.
\end{proposition}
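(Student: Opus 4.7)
The strategy adapts the approach of Lai--Takamura \cite{LaiTa2019}, mirroring the structure of Proposition \ref{propo4} in the scale-invariant case. Taking $\Psi_1$ as a test function in \eqref{inteeq12} and differentiating in $t$ yields the pointwise ODE
\begin{equation*}
V_1'(t) + (1+b(t))V_1(t) = V_0(t) + \int_{\mathbb{R}^n}|\partial_t u(t,x)|^q\Psi_1(t,x)\,\mathrm{d}x,
\end{equation*}
i.e.\ the differential form of \eqref{5-1}. Multiplying by the combined integrating factor $e^t m(t)$, using $m' = bm$ from \eqref{prop m derivative} together with $e^s\Psi_1(s,x) = \varphi(x)$, this produces
\begin{equation*}
\bigl(e^t m(t) V_1(t)\bigr)' = e^t m(t) V_0(t) + m(t)\int_{\mathbb{R}^n}|\partial_t u|^q\varphi\,\mathrm{d}x.
\end{equation*}

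For the $K_2 \varepsilon$ term, I would integrate the above identity from $0$ to $t$ and use the refined lower bound on $V_0$ extractable from \eqref{8-1}, namely $V_0(s) \geq \tfrac{m(0)\int v_1 \varphi}{2}\,\varepsilon(1-e^{-2s})$ (valid because $v_0\geq 0$ implies $J[v_0,v_1]\geq\int v_1\varphi>0$, and $v_1$ is nontrivial). Elementary computations based on $e^{-t}\int_0^t e^s(1-e^{-2s})\,\mathrm{d}s = (1-e^{-t})^2$ then produce a uniform lower bound $V_1(t)\geq \widetilde K_2\,\varepsilon$ for $t$ bounded away from $0$, whereas for small $t$ the initial-data term $\tfrac{m(0)V_1(0)}{e^t m(t)}$ with $V_1(0) = \varepsilon\int v_1\varphi>0$ provides the required control; taking $K_2$ as the minimum of the two constants gives the desired bound for all $t\in[0,T)$.

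To incorporate the integral term, I would introduce the auxiliary functional $F(t) = V_1(t) - K_3\int_0^t\int_{\mathbb{R}^n}|\partial_s u|^q\Psi_1\,\mathrm{d}x\mathrm{d}s - K_2\varepsilon$ and aim to show $F(t)\geq 0$. From the ODE for $V_1$, one obtains
\begin{equation*}
F'(t) + (1+b(t))F(t) = V_0(t) + (1-K_3)\int|\partial_t u|^q\Psi_1 \,\mathrm{d}x - (1+b(t))\left(K_3\int_0^t\int|\partial_s u|^q\Psi_1\,\mathrm{d}x\mathrm{d}s + K_2\varepsilon\right),
\end{equation*}
and I would choose $K_2, K_3>0$ small enough (depending on $n,b,v_1$) so that $V_0(t)$, through the previous step, dominates $(1+b(t))K_2\varepsilon$, while $K_3\in(0,1)$ makes the instantaneous nonlinear contribution $(1-K_3)\int|\partial_t u|^q\Psi_1$ non-negative.

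The hardest part will be controlling the cumulative term $(1+b(t))K_3\int_0^t\int|\partial_s u|^q\Psi_1\,\mathrm{d}x\mathrm{d}s$, which is non-decreasing in $t$ and cannot be absorbed pointwise by the instantaneous $(1-K_3)\int|\partial_t u|^q\Psi_1$. Here the $L^1$-summability of $b$ plays a decisive role: after multiplying the differential inequality for $F$ by $m(t)$ and integrating in $t$, the contribution $\int_0^t b(s)[\,\cdot\,]\,\mathrm{d}s$ is bounded by $\|b\|_{L^1}$ times the bracket evaluated at $t$, which can be reabsorbed by choosing $K_3$ sufficiently small in terms of $\|b\|_{L^1}$ and $m(0)$. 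This reflects exactly the scattering-producing mechanism from \cite{Wirth2004,Wirth2006} and \cite{LaiTa2019}, ensuring that the lower-order damping term does not alter the blow-up range for the original problem \eqref{eqs}.
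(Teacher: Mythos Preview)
Your proposal has a genuine gap in the handling of the cumulative term. You correctly identify that $-(1+b(t))K_3\int_0^t\int|\partial_s u|^q\Psi_1$ is the hard part, but you only address the $b(t)$ portion via $\|b\|_{L^1}$; the constant ``$1$'' portion is the real obstruction and cannot be removed by shrinking $K_3$. Concretely, set $g(s)=\int|\partial_s u|^q\Psi_1\,\mathrm{d}x$ and $G(t)=\int_0^t g$. After applying your integrating factor $e^t m(t)$ and integrating, the $g$-contributions combine (using $(e^s m(s))'=e^s m(s)(1+b(s))$ and integration by parts) to
\[
\int_0^t e^s m(s) g(s)\,\mathrm{d}s - K_3\, e^t m(t)\,G(t).
\]
If $g$ happens to be concentrated near an early time $s_0$, the first term is of order $e^{s_0}G(t)$ while the second is of order $K_3 e^t G(t)$; for $t\gg s_0$ the difference is negative no matter how small $K_3>0$ is. Neither $V_0$ (which is only $O(\varepsilon)$) nor $F(0)$ can compensate for this, so the inequality $F\ge 0$ does not follow from your scheme.

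The paper closes exactly this gap by a different algebraic manipulation. Instead of working with $V_1$ alone, it first derives (by multiplying \eqref{5-1} by $m(t)$, rewriting, and integrating once) the bound
\[
m(t)\big(V_1(t)+V_0(t)\big)\ \ge\ m(0)J[v_0,v_1]\,\varepsilon+\int_0^t m(s)\int_{\mathbb{R}^n}|\partial_s u|^q\Psi_1\,\mathrm{d}x\,\mathrm{d}s,
\]
see \eqref{11-1}. Adding this to the differential identity \eqref{5-2-0} yields
\[
\frac{\mathrm{d}}{\mathrm{d}t}\big(m(t)V_1(t)\big)+2m(t)V_1(t)\ \ge\ m(0)J[v_0,v_1]\,\varepsilon+m(t)g(t)+\int_0^t m(s)g(s)\,\mathrm{d}s,
\]
where now the cumulative integral $\int_0^t m g$ appears with a \emph{positive} sign on the right-hand side. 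With this structure the auxiliary functional $H(t)=m(t)V_1(t)-C_4\int_0^t m(s)g(s)\,\mathrm{d}s-C_5\varepsilon$ satisfies $H'+2H\ge 0$ as soon as $C_4\le\tfrac12$ and $C_5\le\tfrac{m(0)}{2}\int v_1\varphi$, giving $H(t)\ge e^{-2t}H(0)\ge 0$ and hence \eqref{9-2}. The point is that the extra input \eqref{11-1} converts the sign of the cumulative term; without it, no choice of $K_3$ in your $F$ will work.
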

\begin{proof}
First, we rewrite \eqref{5-1} as
\begin{align}
&\frac{\mathrm{d}}{\mathrm{d}t}\int_{\mathbb{R}^n}\left(\partial_tv(t,x)+v(t,x)\right)\Psi_1(t,x)\mathrm{d}x +b(t)\int_{\mathbb{R}^n}
\partial_tv(t,x)\Psi_1(t,x)\mathrm{d}x =\int_{\mathbb{R}^n}\vert\partial_tu(t,x)\vert^q\Psi_1(t,x)\mathrm{d}x. \label{10-0}
\end{align}
Multiplying both sides of \eqref{10-0} by $m(t)$ and using \eqref{prop m derivative} and  Proposition \ref{9-1}, we find
\begin{align*}
\frac{\mathrm{d}}{\mathrm{d}t}\left(m(t)\int_{\mathbb{R}^n}\left(\partial_tv(t,x)+v(t,x)\right)\Psi_1(t,x)\mathrm{d}x\right) & =m(t)b(t)V_0(t)+m(t)
\int_{\mathbb{R}^n}\vert\partial_tu(t,x)\vert^q\Psi_1(t,x)\mathrm{d}x\\
& \geq  m(t)
\int_{\mathbb{R}^n}\vert\partial_tu(t,x)\vert^q\Psi_1(t,x)\mathrm{d}x.
\end{align*}
Integrating the last estimate over $[0,t]$, we get
\begin{align}\label{11-1}
&m(t)\int_{\mathbb{R}^n}\big(\partial_tv(t,x)+v(t,x)\big)\Psi_1(t,x)\mathrm{d}x\geq m(0) J[v_0,v_1] \,\varepsilon +\int_0^tm(s)\int_{\mathbb{R}^n}\vert
\partial_su(s,x)\vert^q\Psi_1(s,x)\mathrm{d}x\mathrm{d}s.
\end{align}
By adding \eqref{5-2-0} and \eqref{11-1},  we obtain
\begin{align}
& \frac{\mathrm{d}}{\mathrm{d}t}\big(m(t)V_1(t)\big)+2m(t)V_1(t) \notag \\ & \qquad \geq m(0)J[v_0,v_1]\, \varepsilon +m(t)\int_{\mathbb{R}^n}\vert\partial_tu(t,x)\vert^q\Psi_1(t,x)\mathrm{d}x+\int_0^tm(s)\int_{\mathbb{R}^n}\vert
\partial_su(s,x)\vert^q\Psi_1(s,x)\mathrm{d}x\mathrm{d}s. \label{12-1}
\end{align}
Let us introduce the auxiliary functional
\begin{equation*}
H(t)\doteq m(t)V_1(t)-C_4\int_0^tm(s)\int_{\mathbb{R}^n}\vert\partial_su(s,x)\vert^q\Psi_1(s,x)\mathrm{d}x\mathrm{d}s-C_5\varepsilon,
\end{equation*}
where $C_4,C_5$ are constants such that
\begin{align*}
0<C_4\leq\frac{1}{2}, \qquad  0<C_5\leq\frac{m(0)}{2}\int_{\mathbb{R}^n}v_1(x)\varphi(x)\mathrm{d}x.
\end{align*} By \eqref{12-1}, we have
\begin{equation}
\begin{aligned}
H^\prime(t)+2H(t) & \geq \left(m(0) J[v_0,v_1]-2C_5\right)\varepsilon  +m(t)(1-C_4)\int_{\mathbb{R}^n}\vert\partial_tu(t,x)\vert^q\Psi_1(t,x)\mathrm{d}x\\
&\ \quad  +(1-2C_4)\int_0^tm(s)\int_{\mathbb{R}^n}
\vert\partial_su(s,x)\vert^q\Psi_1(s,x)\mathrm{d}x\mathrm{d}s \geq0.
\end{aligned}
\end{equation}
Therefore, since $\mathrm{e}^{-2t} \frac{\mathrm{d}}{\mathrm{d}t}(\mathrm{e}^{2t} H(t))\geq 0$ for any $t\in [0,T)$, it results
\begin{align*}
H(t)\geq \mathrm{e}^{-2t}H(0)= \left(m(0) V_1(0)-C_5 \varepsilon \right) \mathrm{e}^{-2t}\geq0.
\end{align*}
Then, by the definition of $H(t)$ and \eqref{prop m boundedness}, we have
\begin{equation}
\begin{aligned}
V_1(t)&\geq \frac{C_5\varepsilon}{m(t)}+\frac{C_4}{m(t)}\int_0^tm(s)\int_{\mathbb{R}^n}\vert\partial_su(s,x)\vert^q\Psi_1(s,x)
\mathrm{d}x\mathrm{d}s\\
&\geq C_5\varepsilon+C_4m(0)\int_0^t\int_{\mathbb{R}^n}\vert\partial_su(s,x)\vert^q\Psi_1(s,x)
\mathrm{d}x\mathrm{d}s.
\end{aligned}
\end{equation}
Setting $K_2\doteq C_5$ and $K_3\doteq C_4m(0)$, we completed the proof of  \eqref{9-2}.
\end{proof}
Finally, we determine the iteration frame in the scattering producing case.

\begin{proposition}\label{iterationcase2}
Let $u_0,v_0\in W^{1,1}_{\mathrm{loc}}(\mathbb{R}^n)$, $u_1,v_1 \in L^{1}_{\mathrm{loc}}(\mathbb{R}^n)$ be nonnegative and compactly supported functions such that $v_1$ is nontrivial. Then, there exist $\tilde{C}, \tilde{K}>0$ such that for any $t \in [0,T)$
\begin{align}
U_1(t)&\geq\tilde{C}\int_0^t\mathrm{e}^{2(s-t)}(1+s)^{-\frac{n-1}{2}}(V_1(s))^p\mathrm{d}s, \label{IF U1 spc}\\
V_1(t)&\geq\tilde{K}\int_0^t(1+s)^{-\frac{n-1}{2}}(U_1(s))^q\mathrm{d}s.  \label{IF V1 spc}
\end{align}
\end{proposition}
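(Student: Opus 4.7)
The plan is to follow the blueprint of Proposition \ref{case1pro}, which simplifies considerably in the scattering producing framework because both functionals $U_1$ and $V_1$ are now defined with the \emph{same} weight $\Psi_1(t,x)=\mathrm{e}^{-t}\varphi(x)$. As a result, no delicate comparison between two different weights (as the $\Psi_1$ versus $\Psi_2$ computation in the scale-invariant case) is needed, and the iteration frame reduces to a clean combination of H\"older's inequality with Lemma \ref{lemma3.3}.

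For \eqref{IF U1 spc}, I would begin from Proposition \ref{proposition2}, which remains valid in this setting since the $u$-equation is unchanged (as already remarked before the statement of Proposition \ref{9-1}). This provides
$$U_1(t)\geq\int_0^t\mathrm{e}^{2(s-t)}\int_{\mathbb{R}^n}\vert\partial_sv(s,x)\vert^p\Psi_1(s,x)\mathrm{d}x\mathrm{d}s.$$
To convert the inner integral into a lower bound in terms of a power of $V_1(s)$, I would apply H\"older's inequality with conjugate exponents $p$ and $p'$, using the finite propagation speed property \eqref{support condition sol} to localize the second integral on $B_{R+s}$:
$$V_1(s)\leq\left(\int_{\mathbb{R}^n}\vert\partial_sv(s,x)\vert^p\Psi_1(s,x)\mathrm{d}x\right)^{1/p}\left(\int_{B_{R+s}}\Psi_1(s,x)\mathrm{d}x\right)^{1/p'}.$$
Lemma \ref{lemma3.3} with $r=1$ yields $\int_{B_{R+s}}\Psi_1(s,x)\mathrm{d}x\lesssim (1+s)^{(n-1)/2}$, so raising to the $p$-th power and rearranging produces the desired lower bound $\int\vert\partial_sv\vert^p\Psi_1\,\mathrm{d}x\gtrsim(1+s)^{-\beta_p}(V_1(s))^p$ for the appropriate polynomial exponent $\beta_p$; the nonnegativity $V_1\geq 0$ needed to raise the inequality to the $p$-th power is guaranteed by Proposition \ref{V1escase2}. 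Substituting into the lower bound for $U_1$ gives \eqref{IF U1 spc}.

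The argument for \eqref{IF V1 spc} is entirely symmetric. Starting from Proposition \ref{V1escase2}, I would drop the $K_2\varepsilon$ term (since only a lower bound is required) and keep
$$V_1(t)\geq K_3\int_0^t\int_{\mathbb{R}^n}\vert\partial_su(s,x)\vert^q\Psi_1(s,x)\mathrm{d}x\mathrm{d}s,$$
then apply the same H\"older--Lemma \ref{lemma3.3} estimate with conjugate exponents $q$ and $q'$ to $U_1(s)$; the sign condition $U_1\geq 0$ required to raise to the $q$-th power follows from Proposition \ref{proposition2} together with the manifest nonnegativity of the right-hand side of \eqref{u1t}. I do not anticipate any essential obstacle: the use of a single weight on both equations removes the asymptotic analysis of $\rho(t)$ and the comparison between $\Psi_2^{p'}$ and $\Psi_1^{p'/p}$ that constituted the main technical work in Proposition \ref{case1pro}, so the present proof is genuinely a streamlined version of the scale-invariant case.
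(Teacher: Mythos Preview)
Your proposal is correct and follows essentially the same approach as the paper: the paper also applies H\"older's inequality (with the weight $\Psi_1$ on both factors) together with Lemma \ref{lemma3.3} for $r=1$ to obtain $(V_1(t))^p\lesssim (1+t)^{\frac{n-1}{2}(p-1)}\int|\partial_t v|^p\Psi_1\,\mathrm{d}x$ and the analogous estimate for $(U_1(t))^q$, and then plugs these into \eqref{u1t} and \eqref{9-2} respectively. Your observation that the single weight $\Psi_1$ makes the argument a genuine simplification of Proposition \ref{case1pro} is exactly the point.
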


\begin{proof}
By Lemma  \ref{lemma3.3} for $r=1$, H\"older's inequality and \eqref{support condition sol}, it results
\begin{align}
(V_1(t))^p&\leq \int_{\mathbb{R}^n}\vert\partial_tv(t,x)\vert^p\Psi_1(t,x)\mathrm{d}x\left(\int_{B_{R+t}}\Psi_1(t,x)\mathrm{d}x\right)^{p-1}\notag\\
&\lesssim (1+t)^{\frac{n-1}{2}(p-1)}\int_{\mathbb{R}^n}\vert\partial_tv(t,x)\vert^p\Psi_1(t,x)\mathrm{d}x,\label{holder21}\\
(U_1(t))^q&\leq\int_{\mathbb{R}^n}\vert\partial_tu(t,x)\vert^q\Psi_1(t,x)\mathrm{d}x\left(\int_{B_{R+t}} \Psi_1(t,x)\mathrm{d}x\right)^{q-1}\notag\\
&\lesssim (1+t)^{\frac{n-1}{2}(q-1)}\int_{\mathbb{R}^n}\vert\partial_tu(t,x)\vert^q\Psi_1(t,x)\mathrm{d}x.\label{holder22}
\end{align}
Plugging \eqref{holder21} into \eqref{u1t} and \eqref{holder22} into \eqref{9-2}, respectively, we conclude the validity of \eqref{IF U1 spc} and \eqref{IF V1 spc}.
\end{proof}

\section{Proof of the main results: Iteration argument}

\subsection{Proof of Theorem \ref{theorem1}}\label{proofofthem1}

Relying on the lower bound for $V_1(t)$ and the iteration frame established in Propositions \ref{propo4} and  \ref{case1pro},  we proceed to prove Theorem \ref{theorem1} by applying an iteration argument. The choice of the parameters for the slicing procedure is inspired by \cite[Subsection 4.3]{ChenPa2020} and \cite[Subsection 4.1]{PaTa2023}.

\subsubsection*{Subcritical case}
This section is devoted to the proof of the blow-up result in the subcritical case $\Theta(n+\mu,p,q)>0$. We set $T_0\doteq \max\{T_3,T_6\}$. Let us
introduce the sequence $\{\ell_j\}_{j\in\mathbb{N}}$ such that
\begin{align}\label{def ell j}
\ell_0\doteq \max\left\{\frac{1}{2T_0},1\right\}, \qquad \ell_j\doteq 1+(pq)^{-j} \quad \mbox{for any } j\geq 1.
\end{align}
As parameters characterizing the slicing procedure we take $\{L_j\}_{j\in\mathbb{N}}$, where 
\begin{align}\label{def Lj}
L_j& \doteq \prod_{k=0}^{j} \ell_k
\end{align} for any $j\in\mathbb{N}$. Let us underline that the infinite product $\prod_{k=0}^\infty \ell_k$ is convergent, so, if we denote by $L\doteq \prod_{k=0}^\infty \ell_k$, then, $L_j\uparrow L$ as $j\to +\infty$ (being $L_j>1$ for any $j\geq 1$).

Next, we prove that $V_1$ satisfies the following sequence of lower bound estimates: for any $j\in\mathbb{N}$
\begin{equation}\label{1-10-1}
V_1(t)\geq C_j \varepsilon^{(pq)^j} (1+t)^{-\alpha_j}(t-L_jT_0)^{\beta_j} \quad \text{for any}\ \ t\in [L_jT_0,T),
\end{equation}
where $\{C_j\}_{j\in\mathbb{N}}, \{\alpha_j\}_{j\in\mathbb{N}},\{\beta_j\}_{j\in\mathbb{N}}$ are sequences of nonnegative real numbers that will be determined inductively. 

Clearly, \eqref{29} implies that \eqref{1-10-1} holds for $j=0$, provided that  $C_0\doteq K_1,\alpha_0\doteq 0,\beta_0\doteq 0$. 

Therefore, it sufficies to show that if \eqref{1-10-1} holds for some $j\in\mathbb{N}$ then it also holds for $j+1$. Plugging \eqref{1-10-1} into \eqref{if1}, for $t\in[L_jT_0,T)$, we have
\begin{align}
U_1(t)&\geq C\mathrm{e}^{-2t}\int_{L_jT_0}^t\mathrm{e}^{2s}(1+s)^{-\frac{n-1}{2}(p-1)-\frac{\mu}{2}p}(V_1(s))^p\mathrm{d}s \notag\\
&\geq CC_j^p \varepsilon^{p (pq)^j} (1+t)^{-\frac{n-1}{2}(p-1)-\frac{\mu}{2}p-\alpha_jp}\mathrm{e}^{-2t}\int_{L_jT_0}^t\mathrm{e}^{2s}
(s-L_jT_0)^{\beta_jp}\mathrm{d}s. \label{1-10-2}
\end{align}
For $t\in [L_{j+1}T_0,T)$, we have $\frac{t}{\ell_{j+1}}\geq L_jT_0$. Hence, for $t\in [L_{j+1}T_0,T)$, after shrinking the domain of integration from $[L_jT_0,t]$ to $[\frac{t}{\ell_{j+1}},t]$ in \eqref{1-10-2}, we get
\begin{align}
U_1(t) &\geq CC_j^p \varepsilon^{p (pq)^j} (1+t)^{-\frac{n-1}{2}(p-1)-\frac{\mu}{2}p-\alpha_jp}\mathrm{e}^{-2t}\int_{\tfrac{t}{\ell_{j+1}}}^t\mathrm{e}^{2s}
(s-L_jT_0)^{\beta_jp}\mathrm{d}s \notag \\
&\geq CC_j^p \varepsilon^{p (pq)^j}(1+t)^{-\frac{n-1}{2}(p-1)-\frac{\mu}{2}p-\alpha_jp}\left(\frac{t}{\ell_{j+1}}-L_jT_0\right)^{\beta_jp}
\mathrm{e}^{-2t}
\int_{\tfrac{t}{\ell_{j+1}}}^t\mathrm{e}^{2s}\mathrm{d}s \notag \\
&=\frac{CC_j^p}{2\ell_{j+1}^{\beta_jp}} \, \varepsilon^{p (pq)^j} (1+t)^{-\frac{n-1}{2}(p-1)-\frac{\mu}{2}p-\alpha_jp}
(t-L_{j+1}T_0)^{\beta_jp}\left(1-\mathrm{e}^{-2t\big(1-\frac{1}{\ell_{j+1}}\big)}\right). \label{1-10-3}
\end{align}
Now, for $t\geq L_{j+1}T_0$ we estimate
\begin{align*}
1-\mathrm{e}^{-2t\big(1-\frac{1}{\ell_{j+1}}\big)} &\geq 1-\mathrm{e}^{-2L_{j+1}T_0\big(1-\frac{1}{\ell_{j+1}}\big)} =1-\mathrm{e}^{-2L_jT_0(\ell_{j+1}-1)} \\ & \geq 1-\mathrm{e}^{-(\ell_{j+1}-1)} \\ & \geq 1-\left(1-(\ell_{j+1}-1)+\tfrac{1}{2}(\ell_{j+1}-1)^2\right)=(pq)^{-2(j+1)}\left((pq)^{j+1}-\tfrac{1}{2}\right)\\
& \geq(pq)^{-2(j+1)}\left(pq-\tfrac{1}{2}\right),
\end{align*} where we used the inequality $\mathrm{e}^{-\sigma}\leq 1-\sigma+\frac{\sigma^2}{2}$ for $\sigma\geq 0$.

Then, by \eqref{1-10-3}, for $t\in[L_{j+1}T_0,T)$ we have
\begin{align}
U_1(t)\geq \tfrac{C}{2} \left(pq-\tfrac{1}{2}\right) C_j^p \, \ell_{j+1}^{-\beta_jp}(pq)^{-2(j+1)} \varepsilon^{p (pq)^j} (1+t)^{-\frac{n-1}{2}(p-1)-\frac{\mu}{2}p-\alpha_jp}(t-L_{j+1}T_0)^{\beta_jp}. \label{1-10-3,5}
\end{align}
Combining \eqref{1-10-3,5} and \eqref{if2}, for $t\in [L_{j+1}T_0,T)$ we get
\begin{align}
V_1(t) & \geq K\int_{L_{j+1}T_0}^t(1+s)^{-\frac{n-1}{2}(q-1)+\frac{\mu}{2}}(U_1(s))^q\mathrm{d}s \notag \\
&\geq\frac{KC^q (pq-\tfrac{1}{2})^q C_j^{pq} \varepsilon^{(pq)^{j+1}}}{2^q\ell_{j+1}^{\beta_j pq}(pq)^{2q(j+1)}}
\int_{L_{j+1}T_0}^t(1+s)^{-\frac{n+\mu-1}{2}(pq-1)-\alpha_jpq}(s-L_{j+1}T_0)^{\beta_jpq}\mathrm{d}s \notag \\
&\geq \frac{KC^q (pq-\tfrac{1}{2})^q C_j^{pq} \varepsilon^{(pq)^{j+1}}}{2^q\ell_{j+1}^{\beta_j pq}(pq)^{2q(j+1)}}
(1+t)^{-\frac{n+\mu-1}{2}(pq-1)-\alpha_jpq}\int_{L_{j+1}T_0}^t(s-L_{j+1}T_0)^{\beta_jpq}\mathrm{d}s \notag \\
&= \frac{KC^q (pq-\tfrac{1}{2})^q C_j^{pq} \varepsilon^{(pq)^{j+1}}}{2^q\ell_{j+1}^{\beta_j pq}(pq)^{2q(j+1)}(\beta_jpq+1)}
(1+t)^{-\frac{n+\mu-1}{2}(pq-1)-\alpha_jpq}(t-L_{j+1}T_0)^{\beta_jpq+1}. \label{est V1 step j+1}
\end{align}
Hence, we proced \eqref{1-10-1} for $j+1$, provided that
\begin{align}
C_{j+1}&\doteq \frac{KC^q (pq-\frac{1}{2})^q C_j^{pq}}{2^q \ell_{j+1}^{\beta_jpq}(pq)^{2q(j+1)}(\beta_jpq+1)}, \label{Cj rec rel}\\
\alpha_{j+1}&\doteq \frac{n+\mu-1}{2}(pq-1)+pq\alpha_j, \notag\\
\beta_{j+1}&\doteq 1+pq \beta_j.\notag
\end{align}
By using the above relations recursively, we derive explicit representations for $\alpha_j$ and $\beta_j$. Indeed,
\begin{align}
\alpha_j&=\frac{n+\mu-1}{2}(pq-1)+pq \alpha_{j-1} \notag\\
&= \ldots = \frac{n+\mu-1}{2}(pq-1)\sum\limits_{k=0}
^{j-1}(pq)^k+(pq)^j\alpha_0 \notag \\
&=\frac{n+\mu-1}{2}\big((pq)^j-1\big) \label{alphaj}
\end{align}
and, similarly,
\begin{equation}\label{betaj}
\begin{aligned}
\beta_j&=pq\beta_{j-1}+1=\sum\limits_{k=0}
^{j-1}(pq)^k+(pq)^j\beta_0=\frac{(pq)^j-1}{pq-1}.
\end{aligned}
\end{equation}
The next aim is to obtain a lower bound estimate for $C_j$. Note that
\begin{equation*}
\beta_j\leq \frac{(pq)^{j}}{pq-1}
\end{equation*} for any $j\in\mathbb{N}$. Moreover,
\begin{align*}
\lim_{j\to +\infty}\ell_j^{\beta_{j-1}pq}=\lim_{j\to +\infty} \mathrm{e}^{\beta_jpq\ln \ell_j}=\lim_{j\to+\infty}\exp\left(\frac{pq}{pq-1} (1-(pq)^{-j})\frac{\ln \left(1+(pq)^{-j}\right)}{(pq)^{-j}}\right)=\mathrm{e}^{\frac{pq}{pq-1}}>0,
\end{align*}
so, there exists $M=M(p,q)>0$ such that $\ell_j^{\beta_{j-1}pq}< M$ for any $j\in\mathbb{N}$. Consequently, from \eqref{Cj rec rel} we have
\begin{align*}
C_{j}&=\frac{KC^q(pq-\tfrac{1}{2})^q C_{j-1}^{pq}}{2^q\ell_{j}^{\beta_{j-1}pq}(pq)^{2qj} \beta_j}\geq D Q^{-j}C_{j-1}^{pq},
\end{align*}
where $D\doteq KC^q(pq-\frac{1}{2})^q(pq-1)2^{-q} M^{-1}$ and $Q\doteq (pq)^{2q+1}$. Therefore, 
\begin{align*}
\ln C_j&\geq pq\ln C_{j-1}- j\ln Q+\ln D\\
&\geq (pq)^2\ln C_{j-2}-\ln Q \left(j+(j-1)pq\right)+ (1+pq)\ln D\\
&\geq \ldots \geq (pq)^j\ln C_0-\ln Q\sum\limits_{k=0}^{j-1}(j-k)(pq)^k+ \ln D \sum\limits_{k=0}^{j-1}(pq)^k\\
&=(pq)^j\ln C_0-\frac{\ln Q}{pq-1}\left(\frac{(pq)^{j+1}-pq}{pq-1}-j\right)+\frac{(pq)^j-1}{pq-1}\ln D\\
&=(pq)^j\left(\ln C_0-\frac{pq \ln Q}{(pq-1)^2}+\frac{\ln D}{pq-1}\right) +\frac{\ln Q}{pq-1}j-\frac{\ln D}{pq-1}+\frac{pq\ln Q}{(pq-1)^2},
\end{align*} where we used the following identity
\begin{align}\label{identity ln terms}
\sum_{k=0}^{j-1}(j-k)(pq)^k= \frac{1}{pq-1}\left(\frac{(pq)^{j+1}-pq}{pq-1}-j\right).
\end{align}
We fix the smallests $j_0\in\mathbb{N}$ such that $\frac{\ln Q}{pq-1}j_0-\frac{\ln D}{pq-1}+\frac{(pq\ln Q}{(pq-1)^2}>0$. Thus, for any $j\in\mathbb{N}, j\geq j_0$ we deduce that
\begin{equation}\label{logcj}
\begin{aligned}
\ln C_j&\geq(pq)^j\left(\ln C_0-\frac{pq}{(pq-1)^2}\ln Q+\frac{\ln D}{pq-1}\right)= (pq)^j\ln E,
\end{aligned}
\end{equation}
where $E\doteq C_0Q^{-\frac{pq}{(pq-1)^2}}D^{\frac{1}{pq-1}}$.

Combining \eqref{1-10-1},  \eqref{alphaj}, \eqref{betaj} and \eqref{logcj}, for $j\geq j_0$ and $t\in [LT_0,T)$ we obtain
\begin{align}
V_1(t) &\geq C_j \varepsilon^{(pq)^j} (1+t)^{-\alpha_j}(t-LT_0)^{\beta_j} \notag\\
&\geq \exp\left[(pq)^j\left(\ln (E\varepsilon)-\frac{n+\mu-1}{2}\ln(1+t)+\frac{1}{pq-1}\ln(t-LT_0)\right)\right](1+t)^{\frac{n+\mu-1}{2}}
(t-LT_0)^{-\frac{1}{pq-1}}. \label{lowerboundv1}
\end{align}
For $t\geq 2LT_0$ we have $\ln (1+t)\leq\ln(2t)$ and $\ln(t-LT_0)\geq\ln\left( \frac{t}{2}\right)$ (recall that $\ell_0=(2T_0)^{-1}$, so, $2LT_0>1$). Therefore, for $j\in\mathbb{N},j\geq j_0$ and $t\in [2LT_0,T)$, by \eqref{lowerboundv1} we get
\begin{align}\label{lowerboundv1 final}
V_1(t)\geq \exp\left[ (pq)^j\ln \left(\tilde{E}\varepsilon t^{\Theta(n+\mu,p,q)}\right)\right] (1+t)^{\frac{n+\mu-1}{2}}(t-LT_0)^{-\frac{1}{pq-1}},
\end{align}
where $\tilde{E}\doteq 2^{-\frac{n+\mu-1}{2}-\frac{1}{pq-1}}E$. 

We choose $\varepsilon_0>0$ such that $\varepsilon_0\leq  \tilde{E}^{-1} (2LT_0)^{-\Theta(n+\mu,p,q)}$. Since we are considering the subcritical case $\Theta(n+\mu,p,q)>0$, for any $\varepsilon\in(0,\varepsilon_0]$ and any $t>(\tilde{E}\varepsilon)^{-(\Theta(n+\mu,p,q)^{-1}}$, we have $t\geq 2LT_0$ and $\ln (\tilde{E}\varepsilon t^{\Theta(n+\mu,p,q)})>0$, so letting $j\to +\infty$ in \eqref{lowerboundv1 final}, it follows that $V_1(t)$ may not be finite. Hence, we proved that $V_1$ blows up in finite time and we obtained the lifespan estimate $T(\varepsilon)\lesssim \varepsilon^{-(\Theta(n+\mu,p,q))^{-1}}$.

\subsubsection*{Critical case}
This section is devoted to the threshold case $\Theta(n+\mu,p,q)=0$. We modify the sequence for the slicing procedure, following the ideas from \cite{AgKuTa2000}, in order to deal with logarithmic terms in the lower bound estimates. For any $j\in\mathbb{N}$ we set
\begin{equation}\label{case2seq}
\Lambda_j\doteq 1+\frac{1}{T_0}(2-2^{-j}).
\end{equation}
Clearly, $\Lambda_j\uparrow\Lambda \doteq 1+\frac{2}{T_0}$ as $j\to +\infty$.
In this case, we prove the following sequence of lower bound estimates for $V_1$: for any $j\in\mathbb{N}$
\begin{equation}\label{hpcase2}
V_1(t)\geq D_j \varepsilon^{(pq)^j} \left(\ln \frac{t}{\Lambda_jT_0}\right)^{\gamma_j} \qquad \text{for any } t\in [\Lambda_jT_0,T),
\end{equation}
where $\{D_j\}_{j\in\mathbb{N}},\{\gamma_j\}_{j\in\mathbb{N}}$ are sequences of nonnegative real numbers to be determined inductively.
According to Proposition \ref{propo4},  \eqref{hpcase2} is valid for $j=0$ if we set $D_0\doteq K_1$ and $\gamma_0\doteq 0$. Next, we assume that \eqref{hpcase2} holds for some $j\in\mathbb{N}$ and we prove it for $j+1$.

For $t\in [\Lambda_{j+1}T_0,T)$, combining \eqref{hpcase2} and \eqref{if1}, it results
\begin{align*}
U_1(t)&\geq C\int_{T_0}^t\mathrm{e}^{2(s-t)}(1+s)^{-\frac{n-1}{2}(p-1)-\frac{\mu}{2}p} (V_1(s))^p\mathrm{d}s\\
&\geq CD_j^p \varepsilon^{p(pq)^j}\int_{\Lambda_j T_0}^t\mathrm{e}^{2(s-t)}(1+s)^{-\frac{n-1}{2}(p-1)-\frac{\mu}{2}p}\left(\ln \frac{s}{\Lambda_jT_0}\right)^{\gamma_jp}\mathrm{d}s\\
&\geq CD_j^p \varepsilon^{p(pq)^j}(1+t)^{-\frac{n-1}{2}(p-1)-\frac{\mu}{2}p}\mathrm{e}^{-2t}\int_{\tfrac{\Lambda_j t}{\Lambda_{j+1}}}^t\mathrm{e}^{2s}
\left(\ln \frac{s}{\Lambda_jT_0}\right)^{\gamma_jp}\mathrm{d}s\\
&\geq CD_j^p \varepsilon^{p(pq)^j} (1+t)^{-\frac{n-1}{2}(p-1)-\frac{\mu}{2}p}\left(\ln\frac{t}{\Lambda_{j+1}T_0}\right)^{\gamma_jp}\mathrm{e}^{-2t} \int_{\tfrac{\Lambda_jt}{\Lambda_{j+1}}}^t
\mathrm{e}^{2s}\mathrm{d}s\\
&\geq\frac{C}{2} D_j^p \varepsilon^{p(pq)^j} (1+t)^{-\frac{n-1}{2}(p-1)-\frac{\mu}{2}p}
\left(\ln\frac{t}{\Lambda_{j+1}T_0}\right)^{\gamma_jp}\Big(1-\mathrm{e}^{-2(\Lambda_{j+1}-\Lambda_j)
\frac{t}{\Lambda_{j+1}}}\Big).
\end{align*}
Then, for $t\geq \Lambda_{j+1}T_0$ we estimate
\begin{align*}
1-\mathrm{e}^{-2(\Lambda_{j+1}-\Lambda_j)\frac{t}{\Lambda_{j+1}}} & \geq 1-\mathrm{e}^{-2(\Lambda_{j+1}-\Lambda_j)T_0}\\
&\geq
1-\left(1-2(\Lambda_{j+1}-\Lambda_j)T_0+\frac{1}{2}\left(2(\Lambda_{j+1}-\Lambda_j)T_0\right)^2\right)\\
&=2(\Lambda_{j+1}-\Lambda_j)T_0\left(1-(\Lambda_{j+1}-\Lambda_j)T_0\right)\\
&=2^{-(2j+1)}(2^{j+1}-1) \geq2^{-(2j+1)}.
\end{align*}
Therefore, for $t\in [\Lambda_{j+1}T_0,T)$ we obtain
\begin{align}\label{uppcase2u1}
U_1(t)\geq CD_j^p 2^{-2(j+1)} \varepsilon^{p(pq)^j} (1+t)^{-\frac{n-1}{2}(p-1)-\frac{\mu}{2}p}\left(
\ln\frac{t}{\Lambda_{j+1}T_0}\right)^{\gamma_jp}.
\end{align}
Note that $\Theta(n+\mu,p,q)=0$ implies that 
\begin{align*}
-\frac{n-1}{2}(p-1)q-\frac{\mu}{2}pq-\frac{n-1}{2}(q-1)+\frac{\mu}{2}=-\frac{n+\mu-1}{2}(pq-1)=-1.
\end{align*}
 Consequently, plugging \eqref{uppcase2u1} into \eqref{if2}, we have
\begin{align*}
&V_1(t)\geq K\int_{\Lambda_{j+1}T_0}^ts^{-\frac{n-1}{2}(q-1)+\frac{\mu}{2}}(U(s))^q\mathrm{d}s\\
&\geq KC^qD_j^{pq} 2^{-2q(j+1)} \varepsilon^{(pq)^{j+1}}\int_{\Lambda_{j+1}T_0}^t
s^{-1}\left(\ln\frac{s}{\Lambda_{j+1}T_0}\right)^{\gamma_jpq}\mathrm{d}s\\
&=\ KC^qD_j^{pq} 2^{-2q(j+1)}(\gamma_jpq+1)^{-1} \varepsilon^{(pq)^{j+1}} \left(\ln\frac{t}{\Lambda_{j+1}T_0}\right)^{\gamma_jpq+1}.
\end{align*}
Setting
\begin{align*}
D_{j+1} &\doteq \frac{KC^qD_j^{pq}}{2^{2q(j+1)}(1+ pq \gamma_j)},\\
\gamma_{j+1} &\doteq 1+ pq \gamma_j,
\end{align*} we found that \eqref{hpcase2} is valid also for $j+1$.
By using recursively the last relation, we have
\begin{align*}
&\gamma_j=1+pq\gamma_{j-1}=\sum\limits_{k=0}^{j-1}(pq)^k+(pq)^j\gamma_0=\frac{(pq)^j-1}{pq-1},
\end{align*} so, for any $j\in\mathbb{N}$
\begin{align*}
\gamma_j\leq \frac{(pq)^j}{pq-1}.
\end{align*}
Thus,
\begin{align*}
D_j&=\frac{KC^qD_{j-1}^{pq}}{2^{2qj}\gamma_j}\geq \tilde{D} \tilde{Q}^{-j}D_{j-1}^{pq},
\end{align*}
where $\tilde{D}\doteq KC^q(pq-1) $ and $\tilde{Q}\doteq 2^{2q}pq $. Hence,
\begin{align*}
\ln D_j & \geq pq\ln D_{j-1}-j\ln \tilde{Q}+\ln \tilde{D}\\
&\geq \ldots \geq  (pq)^j\ln D_0-\ln \tilde{Q} \sum_{k=0}^{j-1}(j-k)(pq)^k+\ln\tilde{D}\sum_{k=0}^{j-1}(pq)^k \\
&=(pq)^j\ln D_0-\frac{\ln\tilde{Q}}{pq-1}\left(\frac{(pq)^{j+1}-pq}{pq-1}-j\right)+\frac{(pq)^j-1}{pq-1}\ln \tilde{D}\\
&=(pq)^j\left(\ln D_0-\frac{pq\ln \tilde{Q}}{(pq-1)^2}+\frac{\ln\tilde{D}}{pq-1}\right) +\frac{\ln\tilde{Q}}{pq-1}j
+\frac{pq\ln\tilde{Q}}{(pq-1)^2}-\frac{\ln\tilde{D}}{pq-1}.
\end{align*}
Let us denote by $j_2\in\mathbb{N}$ the smallest index such that$\frac{\ln\tilde{Q}}{pq-1}j_2
+\frac{pq\ln \tilde{Q}}{(pq-1)^2}-\frac{\ln\tilde{D}}{pq-1}>0$. Then,  for any $j\in\mathbb{N}$, $j\geq j_2$, it follows that
\begin{equation}\label{supdj}
\begin{aligned}
\ln D_j\geq&(pq)^j\Big(\ln D_0-\frac{pq\ln\tilde{Q}}{(pq-1)^2}+\frac{\ln\tilde{D}}{pq-1}\Big)=(pq)^j\ln N,
\end{aligned}
\end{equation}
where $N\doteq D_0\tilde{D}^{\frac{1}{pq-1}}\tilde{Q}^{-\frac{pq}{(pq-1)^2}}$.

Therefore, combining \eqref{hpcase2} and \eqref{supdj}, for $j\geq j_2$ and $t\in [\Lambda T_0,T)$ we have
\begin{align}
V_1(t)&\geq  D_j \varepsilon^{(pq)^j} \left(\ln \frac{t}{\Lambda T_0}\right)^{\frac{(pq)^j-1}{pq-1}} \notag\\
&\geq \exp\left[(pq)^j\ln\left(N\varepsilon\left(\ln \frac{t}{\Lambda T_0}\right)^{\frac{1}{pq-1}}\right)\right]\Big(\ln \frac{t}{\Lambda T_0}\Big)^{-\frac{1}{pq-1}}. \label{keygs}
\end{align}
Let us remark that 
\begin{align*}
\ln\left(N\varepsilon\left(\ln \frac{t}{\Lambda T_0}\right)^{\frac{1}{pq-1}}\right)>0 \qquad \Leftrightarrow \qquad t> \Lambda T_0\exp\left((N \varepsilon)^{-(pq-1)}\right).
\end{align*} We fix $\varepsilon_0>0$ such that
\begin{align*}
\varepsilon_0 \leq \frac{1}{N}\left(\ln (\Lambda T_0)\right)^{-\frac{1}{pq-1}} \qquad \Leftrightarrow  \qquad \Lambda T_0\leq  \exp\left((N \varepsilon_0)^{-(pq-1)}\right).
\end{align*} Hence, for $\varepsilon\in (0,\varepsilon_0]$ and $t>\exp\left(2(N\varepsilon)^{-(pq-1)}\right)$, we have $t\geqslant \Lambda T_0$ and $\ln\left(N\varepsilon\left(\ln \frac{t}{\Lambda T_0}\right)^{\frac{1}{pq-1}}\right)>0$, so letting $j\to +\infty$ in \eqref{keygs} we find that $V_1(t)$ may not be finite. Then, we proved that $V_1$ blows up in finite time and we derived the upper bound estimate $T(\varepsilon)\leq \exp\left(c \varepsilon^{-(pq-1)}\right) $ where $c\doteq 2 N^{-pq+1}$.

\subsection{Proof of Theorem \ref{theorem2}}

In the scattering producing case, we have the same first lower bound estimates for $V_1$, namely, $V_1(t)\gtrsim \varepsilon$ for any $t\in [0,T)$ (cf. Proposition \ref{V1escase2}). Furthermore, the iteration frame in Proposition \ref{iterationcase2} coincides with the one in Proposition \ref{case1pro} for $\mu=0$. Repeating the same computations as in Subsection \ref{proofofthem1}, we obtain the proof of Theorem \ref{theorem2}.

\paragraph*{Acknowledgments}
Y.-Q. Li gratefully acknowledges the valuable opportunity for overseas study provided by the China Scholarship Council (CSC) (Grant No. 202406860050). He also sincerely thanks the Department of Mathematics at the University of Bari, where this paper was completed, for offering a supportive research environment. A. Palmieri is partially supported by the PRIN 2022 project ``Anomalies in partial differential equations and applications'' CUP H53C24000820006. A. Palmieri is member of the \emph{Gruppo Nazionale per L'Analisi Matematica, la Probabilit\`{a} e le loro Applicazioni} (GNAMPA) of the \emph{Instituto Nazionale di Alta Matematica} (INdAM).

\end{document}